\documentclass[11pt]{article}%
\usepackage{amsmath}
\usepackage{amsfonts}
\usepackage{amssymb}
\usepackage{graphicx}

\usepackage[all]{xy}

\setcounter{MaxMatrixCols}{30}
\providecommand{\U}[1]{\protect\rule{.1in}{.1in}}
%EndMSIPreambleData

\newcommand{\ba}{\begin{eqnarray}}
\newcommand{\ea}{\end{eqnarray}}
\newcommand{\bas}{\begin{eqnarray*}}
\newcommand{\eas}{\end{eqnarray*}}
\newcommand{\be}{\begin{equation}}
\newcommand{\ee}{\end{equation}}

\newcommand{\N}{{\mathbb N}}
\newcommand{\C}{{\mathbb C}}

\newcommand{\R}{{\mathbb R}}
\newcommand{\Z}{{\mathbb Z}}

\newcommand{\kc}{{\cal K}}

\newcommand{\hc}{{\mathcal H}}
\newcommand{\gc}{{\mathcal G}}

\def\hoj{\hc(\omega, J_0)}
\def\goj{\gc(\omega, J_0)}
\def\gco{G_\C}

\newcommand{\hm}{\hc am}
\newcommand{\hmc}{\hc am_\C}
\newcommand{\om}{\omega}

\textwidth 6.5in
\textheight 9.2in
\evensidemargin 0in
\oddsidemargin 0in
\topmargin -0.2in
\headsep 0in
\headheight 0in
\footskip .5in
\tolerance=200
\setlength{\emergencystretch}{2em}

\newtheorem{theorem}{Theorem}
\newtheorem{proposition}[theorem]{Proposition}
\newtheorem{corollary}[theorem]{Corollary}
\newtheorem{lemma}[theorem]{Lemma}
\newtheorem{example}[theorem]{Example}
\newtheorem{definition}[theorem]{Definition}
\newenvironment{proof}[1][Proof]{\noindent\textbf{#1.} }{\ \rule{0.5em}{0.5em}}
\newtheorem{preremark}[theorem]{Remark}
\newenvironment{remark}{\begin{preremark}\rm}{\hfill$\Diamond$\end{preremark}}
\newtheorem{prenotation}[theorem]{Notation}

\numberwithin{equation}{section}
\numberwithin{theorem}{section}
\begin{document}

\title{{On complexified analytic Hamiltonian flows and geodesics on the space of K\"ahler metrics}}
%\title{{On geodesics past the boundary of the space of K\"ahler metrics}}
\author{Jos\'{e} M. Mour\~{a}o\footnote{jmourao@math.tecnico.ulisboa.pt} and Jo\~{a}o P. Nunes\footnote{jpnunes@math.tecnico.ulisboa.pt, corresponding author}}
\maketitle

\begin{center}
Center for Mathematical Analysis, \\ Geometry and Dynamical Systems\\and\\ 
Department of Mathematics\\ Instituto Superior T\'ecnico\\ Universidade de Lisboa\\Av. Rovisco Pais\\ 1049-001 Lisbon, Portugal\\
\end{center}

\bigskip

\begin{abstract}
For a compact real analytic symplectic manifold $(M,\omega)$ we describe 
an approach to the complexification of Hamiltonian flows \cite{Se,Do1,Th1} 
and corresponding geodesics on 
the space of K\"ahler metrics. In this approach, motivated by recent work on 
quantization, the complexified Hamiltonian flows act, through the Gr\"obner theory of Lie series, 
on the sheaf 
of complex valued real analytic functions, changing the sheaves 
of holomorphic functions. This defines an action on the space
of (equivalent) complex structures on $M$ and also a direct action 
on $M$. This description is related to the approach of \cite{BLU}  where one has an action on a complexification $M_\C$ 
of $M$ followed by projection to $M$.   
Our approach allows for the study of some Hamiltonian functions which are not real 
analytic. It also leads naturally to the consideration of continuous degenerations of 
diffeomorphisms and of K\"ahler structures of $M$.
Hence, one can link continuously (geometric quantization) real, and more 
general non-K\"ahler, polarizations with K\"ahler polarizations. 
This corresponds to the extension of the geodesics to the boundary of the space of K\"ahler 
metrics. Three  illustrative examples are considered.

We find an explicit formula
for the complex time evolution of the K\"ahler potential under the flow. 
For integral symplectic forms, this formula corresponds to the complexification of 
the prequantization of Hamiltonian symplectomorphisms.
We verify that certain families 
of K\"ahler structures,
which have been studied in geometric quantization, are geodesic families. 
\end{abstract}
\tableofcontents

%\newpage

\section{Introduction}
\label{s1}

The formal complexification $\hc am_\C(M,\omega)$ of the group of Hamiltonian diffeomorphisms
$\hc am (M,\omega)$ of a symplectic manifold $(M, \omega)$ has been studied recently in
two main contexts.

In K\"ahler geometry, the observation that the space, $\kc_{[\omega]}$, of K\"ahler metrics with
a fixed cohomology class  has the
structure of an infinite-dimensional symmetric space of the form
$\hmc(M,\omega)/ \hm(M,\omega)$, led Semmes \cite{Se} and Donaldson \cite{Do1, Do2} 
to the result that the Hamiltonian flows in imaginary time lead to
geodesics in $\kc_{[\om]}$ with respect to the Donaldson-Semmes-Mabuchi metric \cite{M}.
These geodesics play an important role in the recent breakthrough
in K\"ahler geometry, completing the proof of the link of
$K$-stability of a Fano variety with the existence of a
K\"ahler-Einstein metric \cite{Ti,St,Be,CDS}. (See \cite{PS} for a review.) 

The second context in which $\hmc(M,\omega)$ has been studied has been in 
quantization. 
The applications of imaginary time Hamiltonian flows to quantization  
originated in the work of Thiemann \cite{Th1,Th2} in the context on 
nonperturbative quantum gravity. There, a complex canonical
transformation is used to take the gravitational spin $SU(2)$-connection to the complex Ashtekar 
$SL(2,\C)$ connection. 

In the context of geometric quantization, $\hmc(M,\omega)$ acts
naturally in the space of (geometric quantization) polarizations
by acting on the local functions defining the polarization. This idea has been 
adapted to the framework of geometric quantization
by Hall and Kirwin in \cite{HK1}. It has been further
investigated, for complexifications of Lie groups of compact type, 
in \cite{KMN1,KMN2}. Previous work where these ideas were also present include
\cite{FMMN1,FMMN2}, \cite{BFMN} for symplectic toric manifolds and 
\cite{LS12,Lempert-Szoke10}.  In a series of papers \cite{RZ1,RZ2,RZ3}, Rubinstein and Zelditch have also 
related ideas of quantization to the study of geodesics on the space of K\"ahler metrics.

For geometric quantization the study of complex time Hamiltonian flows has
the advantage of linking continuously the 
(mathematically easier to define) quantum Hilbert spaces for
K\"ahler polarizations with the more difficult to define, especially in
the case of singular polarizations, quantum Hilbert
spaces for real and mixed polarizations, \cite{HK1, KW1, KW2, KMN1, KMN2}.  One motivation for the present 
work was the verification that the K\"ahler families considered in the geometric quantization of cotangent bundles 
of Lie groups, abelian varieties and toric varieties correspond in fact to geodesic families.

As we show below in examples, the imaginary time Hamiltonian flows in geometric quantization 
extends the flows considered in
K\"ahler geometry past the boundary of the space of K\"ahler polarizations. (For geodesics hitting the boundary 
of the space of K\"ahler metrics see, for example, \cite{RZ2}.) While the ``metrics'' 
beyond the boundary will be degenerate or become indefinite on subsets, the corresponding 
polarizations can still be interesting from a geometric quantization point of view.

In \cite{BLU}, the authors also study properties of complexified Hamiltonian flows in  
real analytic manifold $(M,\omega)$. They describe the one-parameter families of diffeomorphisms given by 
the complexification of (real-time) Hamiltonian flows in $(M,\omega)$ 
in terms of flows on a complexification $(M_\C,\omega_\C)$, which is a holomorphic symplectic manifold. 
These one-parameter families of diffeomorphisms on $(M,\omega)$ are 
then obtained by projection along leaves of a time evolving 
holomorphic foliation of $(M_\C,\omega_\C)$.  The projection of the flow does not, 
in general, lead to a flow on $(M,\omega)$.

In the present work, motivated by \cite{HK1,KMN1}, we give a complementary approach 
for real analytic $(M,\omega)$, 
using the Gr\"obner formalism of Lie series \cite{G1,G2}. Let $C^{\rm an}(M)$ be the space of complex 
valued real analytic functions on $M$. This formalism defines 
the exponential $e^{tX_h}$, for the Hamiltonian flow of (real)
$h\in C^{\rm an}(M)$ acting (locally) on $C^{\rm an}(M)$, 
as a series. This point of view in particular, allows for 
explicit formulas for the geodesics and for 
the complex time evolution of the K\"ahler potential. In this formalism, as we show in a few examples, 
by letting the complexified Hamiltonian flow go to
the boundary of the space of K\"ahler metrics, one can study real and mixed polarizations 
(in the sense of geometric quantization). 
The close link to 
geometric quantization also 
clarifies the series expression for the geodesics in the space of K\"ahler potentials, as explained in Section \ref{gqsection}.
When it hits this boundary, the complexified Hamiltonian flow can still continue to exist 
on the complexification $M_\C$ of $M$, but the projection to $M$ ceases to be holomorphic. 
Notice also that, in our approach, we can also {\it in some cases} weaken the requirement of 
analiticity of $h$ and of $M$ and 
consider Hamiltonian flows of non real analytic Hamiltonian functions 
$h\in C^\infty(M)$. (See Section \ref{ssnonanal}.) 

Further applications to K\"ahler geometry, geometric tropicalization and quantization will be studied in \cite{MN,KMN3}.

\section{Complexification of analytic flows and action on complex structures}

We will use the formalism of {\it Lie series} introduced by Gr\"obner \cite{G1,G2}.
Let $M$ be a compact real analytic manifold and $X$ be a real analytic vector field on $M$ with analytic 
flow $\varphi_t$. 
Let $C^{\rm an}(M)$ denote the algebra of complex valued real analytic functions on $M$.
If $f\in C^{\rm an} (M)$ then
$f\circ \varphi_t \in C^{\rm an} (M), \forall t\in \R.$
Moreover, as shown in \cite{G1,G2}, and in particular from Theorem 3 of \cite{G2}, if follows that 
locally on any sufficiently small 
compact domain $K\subset M$, there exists a $T_{f,K}>0$ such that 
\begin{equation}\label{lieseries}
e^{tX}\cdot f := \sum_{k=0}^\infty \frac{X^k(f)}{k!} \, t^k,
\end{equation}
is an absolutely and uniformly convergent series for $|t|<T_{f,K}$, and defines a holomorphic 
function on $K\times \{t\in \C: |t|<T_{f,K}\}$. Following \cite{G2} we call the series in 
(\ref{lieseries}) a Lie series. Consequently, 

\begin{lemma}\label{grobner}
Let $M$ be a compact real analytic manifold and $X$ a real analytic vector field on $M$. For each $f\in C^{\rm an}(M)$ 
there exists $T_{f}>0$ such that for $\tau\in D_0=\{\tau\in \C: |\tau|<T_{f}\}$ the Lie series 
$$
\sum_{k=0}^\infty \frac{X^k(f)}{k!} \, \tau^k 
$$ 
defines a real analytic function on $M\times D_0$.
\end{lemma}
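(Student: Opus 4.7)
My plan is to leverage the cited Gröbner convergence result for real $t$, combined with compactness of $M$, to obtain uniform Cauchy-type estimates on $X^k(f)$, and then to upgrade these to joint real analyticity by extending everything holomorphically to a fixed complex neighborhood of $M$.

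First, I would recall the mechanism behind the convergence of $e^{tX}\cdot f$ in \cite{G1,G2}. Near any $x_0 \in M$, choose a real analytic chart in which both $f$ and the coefficients of $X$ extend holomorphically to a polydisk $\wt U \subset \C^n$ of polyradius $\rho(x_0)$. An iterated application of Cauchy's inequality to the operator $X = \sum_i a_i(z) \partial_{z_i}$ yields a standard combinatorial bound of the form
\[
|X^k(f)(x)| \leq \frac{C(x_0)\, k!}{r(x_0)^k}, \qquad x \in U_{x_0},
\]
on some (smaller) real neighborhood $U_{x_0}$. By compactness, finitely many such neighborhoods cover $M$; taking the minimum of the $r(x_i)$ and the maximum of the $C(x_i)$ produces constants $C, T_f > 0$ with
\[
|X^k(f)(x)| \leq \frac{C\, k!}{(2T_f)^k}, \qquad \forall\, x \in M,\ k \geq 0.
\]

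Given this uniform bound, for any $r < T_f$ the series is dominated on $M \times \overline{\{|\tau|\leq r\}}$ by the convergent geometric series $C \sum_k (r/(2T_f))^k$, so it converges absolutely and uniformly there. This already gives continuity on $M \times D_0$, real analyticity in $x$ for each fixed $\tau$, and holomorphy in $\tau$ for each fixed $x$.

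To promote this separate analyticity to \emph{joint} real analyticity, I would note that the same Cauchy-estimate argument, carried out on the complex polydisks $\wt U_{x_i}$ rather than on their real slices, shows that each $X^k(f)$ extends holomorphically to a fixed complex neighborhood $V$ of $M$ (inside any chosen complexification, or simply patched from the $\wt U_{x_i}$) with the same bound $|X^k(f)(z)| \leq C' k! / T_f^k$ on $V$. The partial sums of the series therefore extend holomorphically to $V \times \{\tau \in \C : |\tau| < T_f\}$, and by the Weierstrass theorem the uniform limit on compact subsets is holomorphic there. Its restriction to $M \times D_0$ is then real analytic, as required.

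The main technical point is the third step: ensuring that the radii of holomorphic extension can be chosen uniformly so that one really has a single complex neighborhood $V$ of $M$ on which all of the $X^k(f)$ are simultaneously controlled. This is precisely where compactness of $M$ is used, so the argument is quite robust once that observation is made; the rest is standard Cauchy-estimate bookkeeping and an application of the Weierstrass convergence theorem for holomorphic functions.
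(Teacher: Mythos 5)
Your proposal is correct, and it in fact supplies an argument where the paper gives none: in the text, Lemma \ref{grobner} is stated as an immediate consequence (``Consequently'') of the convergence results of Gr\"obner \cite{G1,G2} for the Lie series at real $t$ on a compact manifold, with no written proof. What you have written out is essentially the mechanism underlying those cited results: local holomorphic extension of $f$ and of the coefficients of $X$, iterated Cauchy estimates giving $|X^k(f)|\leq C\,k!/r^k$ on shrunk neighborhoods, and compactness of $M$ to make the constants $C$ and $r$ uniform, which yields the uniform absolute convergence for $|\tau|<T_f$. The one substantive point where you go beyond what the paper even implicitly addresses is the passage from separate analyticity (real analytic in $x$ for fixed $\tau$, holomorphic in $\tau$ for fixed $x$) to \emph{joint} real analyticity on $M\times D_0$; you are right that this does not follow formally from separate analyticity, and your fix --- running the same Cauchy estimates on the complex polydisks so that all the $X^k(f)$ extend with uniform bounds to a single complex neighborhood $V$ of $M$, then applying Weierstrass convergence on $V\times D_{T_f}$ and restricting to the totally real slice --- is the standard and correct way to close that gap. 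The only bookkeeping to watch is that the uniform bound on $V$ may force a slightly smaller $T_f$ than the one obtained on the real slice alone, but since $T_f$ is only required to be positive this is harmless.
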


\begin{definition}\label{canbe}
We will say that {\it $e^{\tau X}$ can be applied to $f$} whenever there exists a $T_f>0$ such that the Lie series
$$
e^{\tau X}\cdot f:= \sum_{k=0}^\infty \frac{X^k(f)}{k!}\, \tau^k,
$$
is absolutely and uniformly convergent on compact subsets in $M\times D_0$, where $D_0=\{\tau\in \C: |\tau|<T_{f}\}$. 
\end{definition}

\begin{remark}\label{autom}
The operator $e^{\tau X}$ (see Theorem 5 in \cite{G2}) acts as a (local) automorphism of the algebra 
$C^{\rm an}(M)$, that is, for  $f,g\in C^{\rm an}(M)$ and $|\tau|$ sufficiently small,   
$$e^{\tau X}\cdot (fg) = (e^{\tau X}\cdot f)(e^{\tau X}\cdot g).$$ 
\end{remark} 

Moreover, as a direct consequence of Theorem 6 of \cite{G2},

\begin{theorem}\label{grobnerrr}
Let $D\subset \R^n$ be an open disk centered at the origin, $X$ a real analytic vector field on $D$ and $f\in C^{\rm an}(D)$ 
such that the power series
$$
f (x^1,\dots,x^n) = \sum_{I\in {\N}^n_0} a_Ix^I, \,\,\,\, I=(i^1,\dots,i^n),
$$ 
where $x^I= (x^1)^{i_1}\dots(x^n)^{i_n}$, converges in $D$. Assume that there exists $T>0$ such that, 
for $t\in\R$ with $|t|<T$, $e^{t X}$ 
can be applied to $f,x^1,\dots,x^n$ in a neighbourhood of $0\in D$. Then, one has, for $|t|$ sufficiently small,
$$
e^{t X}\cdot f (x^1,\dots,x^n) = \sum_{k=0}^\infty \frac{X^k(f)}{k!} t^k (x^1,\dots,x^n)=f (e^{t X} x^1,\dots,e^{t X} x^n)= 
f\circ \varphi_t (x^1,\dots,x^n)$$
in a neighbourhood of $0\in D$, where all Lie series above are absolutely and uniformly 
convergent on compact subsets.
\end{theorem}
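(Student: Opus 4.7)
The plan is to decompose the chain of equalities into three parts and verify each using the Gröbner Lie series machinery already set up. The first equality, stating that $e^{tX}\cdot f$ equals its defining power series $\sum_k X^k(f)\,t^k/k!$ with absolute convergence for $|t|$ small, is exactly the content of Lemma \ref{grobner} applied to $f$. The last equality $e^{tX}\cdot f = f\circ\varphi_t$ is the Gröbner identity quoted in the paragraph preceding Lemma \ref{grobner}, valid whenever $e^{tX}$ can be applied to $f$, which holds by hypothesis for $|t|<T$.

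For the middle equality I would apply the same identity separately to each coordinate function $x^j$; the hypothesis guarantees that $e^{tX}$ can be applied to each $x^j$ in a neighborhood of $0$ for $|t|<T$, giving absolutely convergent power series $\varphi_t^j(x)=e^{tX}\cdot x^j$. Writing $\xi^j(t,x):=e^{tX}\cdot x^j$, the identity $f\circ \varphi_t(x) = f(\varphi_t^1(x),\dots,\varphi_t^n(x)) = f(\xi^1(t,x),\dots,\xi^n(t,x))$ is then just composition read in coordinates.

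To see that $f(\xi^1,\dots,\xi^n)$ is itself an absolutely convergent power series in $t$ and matches the Lie series, the natural tool is the local automorphism property recorded in Remark \ref{autom}. For the polynomial truncations $f_N=\sum_{|I|\le N} a_I x^I$ this property yields $e^{tX}\cdot f_N = \sum_{|I|\le N} a_I \prod_{j}(\xi^j(t,x))^{i_j}$, a genuine polynomial identity. For $|x|$ and $|t|$ small enough the point $\xi(t,x)$ stays inside the polydisc of convergence of the Taylor series of $f$, so the right-hand side converges absolutely to $f(\xi^1,\dots,\xi^n)$ as $N\to\infty$.

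The main obstacle is to justify the interchange of this limit with the operator $e^{tX}$, i.e.\ to conclude that $e^{tX}\cdot f = \lim_N e^{tX}\cdot f_N$. A clean route is to invoke Theorem 6 of \cite{G2} directly, which is designed precisely to handle this situation: one fixes a Cauchy majorant for the vector field $X$ and its iterates on a suitable closed polydisc around $0$, bounds the Lie series of the tail $f-f_N$ and the geometric tail $\sum_{|I|>N} a_I \xi^I(t,x)$ by the same majorant series in $(|t|,|x|)$, and lets $N\to\infty$ to collapse all three expressions to the same analytic function on a common small neighborhood.
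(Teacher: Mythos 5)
Your proposal is correct and rests on the same foundation as the paper, which offers no independent argument: Theorem \ref{grobnerrr} is stated there simply as ``a direct consequence of Theorem 6 of \cite{G2}'', with the hard analytic step (the interchange of the truncation limit with $e^{tX}$, controlled by a Cauchy majorant) delegated entirely to that reference, exactly as you do. Your additional scaffolding --- splitting the chain of equalities, handling the polynomial truncations via the automorphism property of Remark \ref{autom}, and noting that $\xi(t,x)$ must remain in the polydisc of convergence --- is a sound and useful elaboration of what the paper leaves implicit.
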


Let now $\dim M=2n$ and let $J_0$ be a complex structure on $M$. Let $\{z^i\}_{i=1,\dots ,n}$ be a local system of $J_0$-holomorphic 
coordinates in an open neighbourhood $U$ of $p\in M$. 

\begin{theorem}\label{lemmalocal1}  
There exists $T>0$ such that for every 
$\tau\in D_{T}=\{\tau\in \C: |\tau|<T\}$ the functions
$$
z^i_\tau = e^{\tau X}\cdot z^i, i=1,\dots,n,
$$
form a system of complex coordinates on some open neighbourhood ${V\subset U}$ of $p$, defining a new 
complex structure $J_\tau$ on $V$ for which the coordinates $\{z^i_\tau\}_{i=1,\dots,n}$ are holomorphic.
\end{theorem}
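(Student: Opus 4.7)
The plan is to show that for $|\tau|$ small enough, the $2n$ complex-valued functions $(z^i_\tau, \bar z^i_\tau)_{i=1}^n$ have linearly independent differentials at $p$; the inverse function theorem then provides a chart $\Phi_\tau := (z^1_\tau, \dots, z^n_\tau) : V \to \C^n$ on a neighbourhood $V$ of $p$, and $J_\tau$ is defined by pulling back the standard complex structure of $\C^n$ through $\Phi_\tau$.

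First I apply Lemma \ref{grobner} to each of the $2n$ functions $z^i, \bar z^i \in C^{\mathrm{an}}(U)$ to obtain a radius $T_0 > 0$ and a neighbourhood $U' \subset U$ of $p$ on which both $z^i_\tau = e^{\tau X} \cdot z^i$ and $\bar z^i_\tau := e^{\tau X} \cdot \bar z^i$ are absolutely convergent Lie series and real analytic in $(q, \tau) \in U' \times D_{T_0}$. Since $X$ is a real vector field, $X^k(\bar z^i) = \overline{X^k(z^i)}$, so $\bar z^i_\tau$ coincides with the pointwise complex conjugate of $z^i_\tau$ when $\tau \in \R$. Termwise differentiation of the Lie series with respect to $q$ is permissible within the domain of absolute convergence, so the covectors $dz^i_\tau|_p$ and $d\bar z^i_\tau|_p$ depend continuously on $\tau$ in $T^*_{p,\C} M$.

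At $\tau = 0$ these $2n$ covectors form the standard basis $(dz^i|_p, d\bar z^i|_p)$ of $T^*_{p,\C} M$, and hence their determinant in this basis is a continuous function of $\tau$ equal to $1$ at $\tau = 0$; it is therefore nonzero on some disk $D_T \subset D_{T_0}$. Equivalently, the real Jacobian of $\Phi_\tau : U' \to \C^n \simeq \R^{2n}$ is nondegenerate at $p$ for every $\tau \in D_T$. Joint continuity of this Jacobian in $(q, \tau)$ combined with compactness of $\overline{D_{T'}}$ for any $T' < T$ then yields, via the inverse function theorem with parameters, a single neighbourhood $V \subset U'$ of $p$ on which $\Phi_\tau$ is a diffeomorphism onto its image for every $\tau \in D_{T'}$; setting $J_\tau := \Phi_\tau^* J_{\mathrm{std}}$ gives the desired complex structure, in which the $z^i_\tau$ are, by construction, holomorphic. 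The main subtlety I expect is precisely this uniform choice of $V$ over $\tau$, which rests on the joint continuity of the Jacobian; everything else reduces to Lemma \ref{grobner} and the standard inverse function theorem.
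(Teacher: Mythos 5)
Your proposal is correct and follows essentially the same route as the paper: Lemma \ref{grobner} gives convergence and joint analyticity of the Lie series, continuity in $\tau$ of the real Jacobian together with its nonvanishing at $\tau=0$ yields nondegeneracy for small $|\tau|$, and the inverse function theorem produces the chart on $V$ and hence $J_\tau$. One cosmetic point: for non-real $\tau$ the function $e^{\tau X}\cdot\bar z^i$ is \emph{not} the complex conjugate of $z^i_\tau$ (that is $e^{\bar\tau X}\cdot\bar z^i$), so the determinant you should track is that of $\{dz^i_\tau,\,\overline{dz^i_\tau}\}$; since this family is equally continuous in $\tau$ and reduces to the standard basis at $\tau=0$, your argument goes through unchanged.
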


\begin{proof} Let us consider an open neighbourhood of $p$, $V'\subset U$, with compact support $\overline{V'}\subset U$. 
The existence of $T$ such that the funcions $z^i_\tau$ are well defined on $V'$ for $|\tau|< T$ follows from Lemma \ref{grobner}. 
By taking a smaller $T$ if necessary, by continuity in $\tau$ and the fact that 
$dz^1\wedge \cdots \wedge dz^n \wedge d\bar z^1 \wedge \cdots \wedge d\bar z^n\neq 0$ at every point in $V'$,
we get that $dz^1_\tau \wedge \cdots \wedge dz_\tau^n \wedge d\bar z^1_\tau \wedge \cdots \wedge d\bar z_\tau^n\neq 0$ at 
every point in $V'$, for $|\tau| < T$. By the inverse function theorem, we obtain that $\{z^i_\tau\}_{i=1,\dots, n}$ form a
 new system of complex coordintes (that is, the real and imaginary parts form a system of real analytic coordinates) 
on some, possibly smaller, open neighbourhood $V\subset V'$ of $p$. 
\end{proof}

\begin{theorem}\label{ajax} There exists $T>0$ such 
that for $\tau\in D_{T}=\{\tau\in \C: |\tau|<T\}$ there exist a global 
complex structure $J_\tau$ on $M$, extending the complex structure
given in local $J_0$--holomorphic charts by Theorem \ref{lemmalocal1},
and a unique biholomorphism
\begin{equation}\label{mapa} 
\varphi_\tau: (M,J_\tau)\to (M,J_0) , 
\end{equation}
which, on local $J_0$-holomorphic coordinates,
acts as $e^{\tau X}$. 
\end{theorem}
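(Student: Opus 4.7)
The plan is to prove the theorem by compactness and gluing applied to Theorem \ref{lemmalocal1}. Choose a finite $J_0$-holomorphic atlas $\{(U_\alpha, z_\alpha)\}_\alpha$ on the compact manifold $M$ together with relatively compact refinements $V_\alpha \Subset U_\alpha$ still covering $M$. Applying Theorem \ref{lemmalocal1} and Lemma \ref{grobner} to each $z_\alpha^i$ (equivalently to its real and imaginary parts) on each $V_\alpha$ produces radii $T_\alpha>0$ such that for $|\tau|<T_\alpha$ the functions $z_\alpha^{i,\tau}:=e^{\tau X}\cdot z_\alpha^i$ are real analytic on $V_\alpha$ and form a $J_\tau^\alpha$-holomorphic coordinate system there. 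Since the cover is finite, $T:=\min_\alpha T_\alpha>0$ serves uniformly.

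The next step is to verify that the local complex structures $J_\tau^\alpha$, $J_\tau^\beta$ agree on the overlap $V_\alpha\cap V_\beta$. The original $J_0$-holomorphic transition has the form $z_\beta^i = F_{\beta\alpha}^i(z_\alpha^1,\dots,z_\alpha^n)$ with $F_{\beta\alpha}^i$ holomorphic, hence real analytic in the real and imaginary parts of $z_\alpha$. Applying Theorem \ref{grobnerrr} to $\mathrm{Re}\, F_{\beta\alpha}^i$ and $\mathrm{Im}\, F_{\beta\alpha}^i$ as real analytic functions of the $2n$ real coordinates attached to $z_\alpha$, together with the $\C$-linearity of $e^{\tau X}$, yields
$$
z_\beta^{i,\tau} \;=\; e^{\tau X}\cdot F_{\beta\alpha}^i(z_\alpha^1,\dots,z_\alpha^n) \;=\; F_{\beta\alpha}^i\bigl(z_\alpha^{1,\tau},\dots,z_\alpha^{n,\tau}\bigr)
$$
on $V_\alpha\cap V_\beta$. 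Thus the new coordinate systems share the same holomorphic transition $F_{\beta\alpha}$, so $J_\tau^\alpha = J_\tau^\beta$ and the local structures assemble into a global complex structure $J_\tau$ on $M$.

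For the biholomorphism, define $\varphi_\tau|_{V_\alpha}$ by sending $q\in V_\alpha$ to the unique point $q'\in U_\alpha$ with $z_\alpha(q')=z_\alpha^\tau(q)$. Continuity of $z_\alpha^\tau$ in $\tau$ together with $z_\alpha^0 = z_\alpha$ and $\overline{V_\alpha}\subset U_\alpha$ allows a further shrinking of $T$ so that the tuple $z_\alpha^\tau(q)$ remains in $z_\alpha(U_\alpha)\subset\C^n$ for every $q\in V_\alpha$ and $|\tau|<T$; the inverse function theorem then makes $q'$ well defined. The overlap identity above gives chart independence, since $z_\beta(\varphi_\tau^\alpha(q)) = F_{\beta\alpha}(z_\alpha^\tau(q)) = z_\beta^\tau(q) = z_\beta(\varphi_\tau^\beta(q))$. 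In the paired local charts $(V_\alpha, z_\alpha^\tau)$ for $J_\tau$ and $(U_\alpha, z_\alpha)$ for $J_0$, the map $\varphi_\tau$ is by construction the identity of coordinate tuples, hence a local biholomorphism; a further compactness/continuity argument, using $\varphi_0=\mathrm{id}_M$, ensures that after one more shrinking of $T$ it is a global diffeomorphism. Uniqueness is then immediate, as any candidate $\psi$ is determined on each $V_\alpha$ by its prescribed action on the coordinate functions $z_\alpha^i$.

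The hard part is the chart-compatibility step: invoking Theorem \ref{grobnerrr}, which is stated for real coordinate systems, on the holomorphic transitions $F_{\beta\alpha}$. This is handled cleanly by splitting into real and imaginary parts and using the $\C$-linearity of the Lie series operator. The remaining ingredients—extracting a uniform $T$ from a finite cover and checking that the candidate $\varphi_\tau$ does not drift outside $M$—are compactness bookkeeping, with the latter relying crucially on $\overline{V_\alpha}\subset U_\alpha$ and the continuity in $\tau$ already built into Lemma \ref{grobner}.
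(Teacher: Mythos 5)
Your proposal is correct and follows essentially the same route as the paper: finite atlas from compactness, a uniform $T$, Theorem \ref{grobnerrr} to show the holomorphic transition functions are preserved (so the new charts assemble into $J_\tau$), and $\varphi_\tau$ defined locally as $q\mapsto z_\alpha^{-1}\bigl(z_\alpha^\tau(q)\bigr)$ and glued via the overlap identity. Your explicit remark about splitting the transition maps into real and imaginary parts to apply Theorem \ref{grobnerrr}, and your acknowledgement that global bijectivity of $\varphi_\tau$ needs a separate continuity argument, are minor refinements of steps the paper treats implicitly, not a different approach.
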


\begin{proof} 
Since $M$ is compact, we can cover it with a finite atlas 
$\{(U_\alpha,z_{\alpha})\}_{\alpha=1,\dots,N}$ 
of $J_0$-holomorphic local charts.
Let $T_{\alpha, p}>0$, $V_{\alpha, p}$ be open sets, $p \in V_{\alpha, p} \subset U_\alpha$
such that the functions
$$
z_{\alpha, p, \tau} = e^{\tau X} \, z_{\alpha, p} =\left(e^{\tau X}\cdot z_{\alpha, p}^1, \dots, e^{\tau X}\cdot z_{\alpha, p}^n\right),
$$
where $z_{\alpha, p} = (z_{\alpha})|_{{V_{\alpha, p}}}$,
are defined on $V_{\alpha, p}$, are holomorphic functions of $\tau$
 and
$$
z_{\alpha, p, \tau}(V_{\alpha, p}) \subset z_\alpha(U_\alpha),
$$ 
for $\tau \in D_{T_{\alpha,p}}$. Let
$\{V_{\alpha_j, p_j}\}_{j=1, \dots, K}$ be a finite subcover of 
(the infinite open cover) $\{V_{\alpha, p}\}_{\alpha = 1, \dots, N, \, p \in M}$
and $\tilde T=\min_j \{T_{\alpha_j, p_j}\}$. Let $\phi_{\alpha_j \alpha_k}$ be the coordinate transition functions. 
From Theorem
\ref{grobnerrr} it follows that there exists $T, \, 0<T \leq \tilde T$ 
such that
\be
\label{tiri}
z_{\alpha_j, p_j, \tau} = e^{\tau X} \cdot z_{\alpha_j, p_j} 
=  e^{\tau X} \cdot \left(\phi_{\alpha_j \alpha_k} \circ  z_{\alpha_k, p_k}\right)
=  \phi_{\alpha_j \alpha_k} \circ z_{\alpha_k, p_k, \tau} \quad \forall \tau\in D_T.
\ee
We have therefore defined a new atlas 
$$\{(V_{\alpha_j,p_j},z_{\alpha_j,p_j, \tau})\}_{j=1,\dots,K}$$ on $M$,
with the same transition functions 
 restricted to the smaller open sets $V_{\alpha_j,p_j}\cap V_{\alpha_k,p_k}$.
Therefore the atlas $\{(V_{\alpha_j, p_j},z_{\alpha_j,p_j, \tau})\}_{j=1,\dots,K}$ defines a new 
complex structure $J_\tau$ on $M$, equivalent to $J_0$. For $\alpha=1,\dots,N$, denote by $\phi_{\alpha}: z_{\alpha}(U_{\alpha})\subset \C^n
\to U_\alpha$ the inverse of the coordinate function $z_{\alpha}$.
For $\tau\in D_T$, define the maps 
$$\varphi_{\tau,j}= \phi_{\alpha_j} \circ z_{\alpha_j, p_j, \tau}: V_{\alpha_j, p_j} \to U_\alpha.$$
We have, from (\ref{tiri}), that the maps $\{\varphi_{\tau, j}\}_{j=1,\dots,K}$ glue together to give a well defined global 
bijective map $\varphi_\tau: M \to M$. 
Indeed, surjectivity follows because $\varphi_\tau$ is a local diffeomorphism near every point and is homotopic to the 
identity so that it maps each connected component of $M$ onto itself. On the other hand, the inverse map $\varphi_{-\tau}$
exists, such that, on the charts $\varphi_\tau(V_{\alpha_j,p_j})$ one has
$$
e^{-\tau X} \cdot z_{\alpha_j,p_j,\tau} = z_{\alpha_j,p_j}.
$$ 
It is clear that $\varphi_\tau$ gives the unique biholomorphism from 
$(M,J_\tau)$ to $(M,J_0)$, such that
$z_{\alpha_j, p_j, \tau} = z_{\alpha_j, p_j} \circ \varphi_\tau$,
which proves the theorem. 
\end{proof}

\begin{remark}
Note that $J_\tau=\varphi_{\tau *}^{-1} J_0 := \varphi_{\tau *}^{-1}\circ J_0 \circ \varphi_{\tau *}$ is the push-forward of $J_0$ 
by $\varphi_\tau^{-1}.$  
\end{remark}

\begin{corollary}Under the conditions of Theorem \ref{ajax}, we have 
$$
\varphi_\tau^*\left({\mathcal O}_{(M,J_0)}\right)= {\mathcal O}_{(M,J_t)},
$$
where ${\mathcal O}_{(M,J)}$ denotes the structure sheaf of $M$ with respect to the complex structure $J$.
\end{corollary}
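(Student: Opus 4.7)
The plan is to reduce the corollary to a standard fact about biholomorphisms: a biholomorphism $\psi: (X, J_X) \to (Y, J_Y)$ pulls back the structure sheaf of $(Y, J_Y)$ onto the structure sheaf of $(X, J_X)$. Since Theorem \ref{ajax} already delivers $\varphi_\tau$ as a biholomorphism $(M, J_\tau) \to (M, J_0)$, everything hinges on unwinding definitions rather than new analytic input.

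Concretely, I would work locally in the atlas constructed in the proof of Theorem \ref{ajax}. Pick a point $p \in M$ and a chart $(V_{\alpha_j, p_j}, z_{\alpha_j, p_j, \tau})$ adapted to $J_\tau$; by construction, $z_{\alpha_j, p_j, \tau} = z_{\alpha_j, p_j} \circ \varphi_\tau$. A function $f$ defined near $\varphi_\tau(p)$ belongs to ${\mathcal O}_{(M, J_0)}$ iff, in the $J_0$-chart $z_{\alpha_j, p_j}$, it is given by a convergent power series in the $z^i_{\alpha_j, p_j}$ alone (no antiholomorphic coordinates). Composing with $\varphi_\tau$ substitutes $z^i_{\alpha_j, p_j, \tau}$ for $z^i_{\alpha_j, p_j}$, so $\varphi_\tau^* f$ is expressed as a convergent power series in the $z^i_{\alpha_j, p_j, \tau}$, hence belongs to ${\mathcal O}_{(M, J_\tau)}$. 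This gives the inclusion $\varphi_\tau^* {\mathcal O}_{(M, J_0)} \subset {\mathcal O}_{(M, J_\tau)}$.

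For the reverse inclusion, I would apply the same argument to the inverse biholomorphism $\varphi_\tau^{-1}: (M, J_0) \to (M, J_\tau)$: any $g \in {\mathcal O}_{(M, J_\tau)}$ is locally a power series in the $z^i_{\alpha_j, p_j, \tau}$, and then $(\varphi_\tau^{-1})^* g$ is a power series in the $z^i_{\alpha_j, p_j}$, hence in ${\mathcal O}_{(M, J_0)}$. Equivalently, one may note that $\varphi_\tau^*$ is an isomorphism of sheaves of $\C$-algebras, so it suffices to match it on generators, and the coordinate functions $z^i_{\alpha_j, p_j}$ generate ${\mathcal O}_{(M, J_0)}$ locally, with $\varphi_\tau^* z^i_{\alpha_j, p_j} = z^i_{\alpha_j, p_j, \tau}$ by the final identity in the proof of Theorem \ref{ajax}.

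There is no real obstacle here; the only subtlety worth flagging is ensuring that the identification of $J_\tau$-holomorphic functions with convergent power series in the $z^i_{\alpha_j, p_j, \tau}$ is legitimate. This is automatic because the construction of $J_\tau$ in Theorem \ref{ajax} \emph{defines} the $J_\tau$-holomorphic coordinates to be precisely the $z^i_{\alpha_j, p_j, \tau} = e^{\tau X} \cdot z^i_{\alpha_j, p_j}$, so holomorphicity with respect to $J_\tau$ is exactly holomorphic dependence on these functions, and the corollary follows.
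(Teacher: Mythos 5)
Your proposal is correct and follows exactly the route the paper intends: the corollary is stated without proof as an immediate consequence of Theorem \ref{ajax}, since $\varphi_\tau:(M,J_\tau)\to(M,J_0)$ is a biholomorphism and the $J_\tau$-holomorphic coordinates are by construction $z_{\alpha_j,p_j,\tau}=z_{\alpha_j,p_j}\circ\varphi_\tau$. Your unwinding of this (both inclusions via local power series in the respective coordinates, or equivalently matching the sheaf isomorphism on the coordinate generators) is the standard argument and needs no further input; note only that the $J_t$ in the displayed equation should read $J_\tau$.
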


\begin{corollary} Let $g\in C^{\infty}(M)$ be a, not necessarily $J_0$-holomorphic function and let $\{z^i\}_{i=1,\dots,n}$ be local 
$J_0$-homolorphic coordinates on $M$. For $T$ as in Theorem \ref{ajax} and $\tau\in D_T$, we have
\begin{equation}\label{merkl}
\varphi_\tau^* g (z^1,\dots,z^n,\bar z^1,\dots,\bar z^n) = g(z_\tau^1,\dots,z_\tau^n,\bar z^1_\tau,\dots,\bar z^n_\tau),
\end{equation}
where $\{z_\tau^i\}_{i=1,\dots, n}$ are defined in Theorem \ref{lemmalocal1} and 
\begin{equation}\label{draghi}
\bar z_\tau = \overline{z_\tau} = e^{\bar \tau X}\cdot \bar z, 
\end{equation}
with $\bar z= (\bar z^1,\dots,\bar z^n), \bar z_\tau= (\bar z^1_\tau,\dots,\bar z^n_\tau)$.
\end{corollary}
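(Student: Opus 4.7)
The plan is to split the corollary into its two assertions \eqref{draghi} and \eqref{merkl}, prove \eqref{draghi} as a simple consequence of $X$ being a \emph{real} analytic vector field, and then derive \eqref{merkl} from the fact that $\varphi_\tau$ is, by Theorem \ref{ajax}, a genuine diffeomorphism of $M$ whose coordinate expression is already known.

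First I would establish \eqref{draghi}. Because $X$ is a real vector field, it carries real analytic functions to real analytic functions, and for any complex valued real analytic $f = u+iv$ one has $X(\bar f) = X(u) - i X(v) = \overline{X(f)}$. Iterating this, $X^k(\bar z^i) = \overline{X^k(z^i)}$ for all $k\geq 0$. By Lemma \ref{grobner} applied to the real analytic functions $\bar z^i$, there is some $T'>0$ so that the Lie series $e^{\bar\tau X}\cdot \bar z^i = \sum_k \tfrac{X^k(\bar z^i)}{k!}\bar\tau^k$ is absolutely convergent for $|\tau|<T'$. Term-by-term conjugation then gives
\[
e^{\bar\tau X}\cdot \bar z^i \;=\; \sum_{k=0}^\infty \frac{\overline{X^k(z^i)}}{k!}\,\bar\tau^k \;=\; \overline{\sum_{k=0}^\infty \frac{X^k(z^i)}{k!}\,\tau^k}\;=\;\overline{z_\tau^i},
\]
on the intersection of convergence disks, shrinking $T$ if necessary.

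Next I would prove \eqref{merkl}. By Theorem \ref{ajax} the map $\varphi_\tau\colon M\to M$ is a (real analytic) diffeomorphism satisfying $z^i\circ \varphi_\tau = z_\tau^i$ in local $J_0$--holomorphic coordinates. Hence for any $g\in C^\infty(M)$ the pull-back $\varphi_\tau^* g = g\circ \varphi_\tau$ is well defined as a smooth function. Evaluating at a point $p$ and using the $(z,\bar z)$ coordinate representation of $g$, I would compute
\[
(g\circ \varphi_\tau)(p) \;=\; g\bigl(z^1(\varphi_\tau(p)),\ldots,\bar z^n(\varphi_\tau(p))\bigr) \;=\; g\bigl(z_\tau^1(p),\ldots,z_\tau^n(p),\overline{z_\tau^1(p)},\ldots,\overline{z_\tau^n(p)}\bigr),
\]
where I used $\bar z^i(\varphi_\tau(p))=\overline{z^i(\varphi_\tau(p))}=\overline{z_\tau^i(p)}$. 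By \eqref{draghi} the conjugates $\overline{z_\tau^i}$ coincide with $\bar z_\tau^i = e^{\bar\tau X}\cdot \bar z^i$, which gives exactly \eqref{merkl}.

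The one point that requires care, and which is really the whole content of the corollary, is that $g$ is allowed to be merely smooth rather than real analytic, so the Lie series machinery cannot be applied directly to $g$. The resolution is conceptual rather than technical: the diffeomorphism $\varphi_\tau$ has already been built in Theorem \ref{ajax} at the level of the manifold, so the Lie series only needs to be invoked on the real analytic coordinate functions $z^i$ (and by conjugation, on $\bar z^i$), after which ordinary composition handles $g$. No further analyticity hypothesis on $g$ is needed, which is precisely what makes \eqref{merkl} useful later for non-analytic objects such as general K\"ahler potentials.
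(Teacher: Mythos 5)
Your proposal is correct and follows exactly the route the paper intends: the corollary is stated without proof as an immediate consequence of Theorem \ref{ajax}, namely that $\varphi_\tau$ is a genuine diffeomorphism with $z^i\circ\varphi_\tau=z^i_\tau$, so pulling back a merely smooth $g$ is just composition, while \eqref{draghi} follows from $X$ being real via termwise conjugation of the Lie series. Your only superfluous caution is the ``shrinking $T$ if necessary'' step: since $e^{\bar\tau X}\cdot\bar z^i$ is the termwise conjugate of $e^{\tau X}\cdot z^i$, its convergence on $D_T$ is automatic and no adjustment of $T$ is needed.
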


\begin{remark}If $\tau=t\in \R$, then $\varphi_t$ in (\ref{mapa}) is the usual time $t$ flow of $X$ and is therefore $J_0$-independent.
For $\tau \notin \R$, $\varphi_\tau$ in general will depend on $J_0$. Moreover, if $\tau \notin \R$, $\varphi_\tau$ does not in general define 
a flow, in the sense that $\varphi_{\tau+\sigma}\neq \varphi_\tau \circ \varphi_\sigma$, even all if three of these diffeomorphisms are defined.
\end{remark}

We now note that if $|\tau|$ becomes too large it may happen that the complex structure $J_\tau$ no 
longer exists even though the functions $\{z^i_{\alpha,\tau}\}_{i=1,\dots,n}$ are well defined, and 
functionally independent, for all the open 
sets in the cover $\{U_\alpha\}$ of $M$. This happens, for instance, if for some $\alpha$ the $2n$-form 
$dz^1_{\alpha,\tau}\wedge \cdots\wedge dz^n_{\alpha,\tau}\wedge d\bar z^1_{\alpha,\tau}
\wedge \cdots\wedge d \bar z^n_{\alpha,\tau}$ has zeros. Note that

\begin{lemma}In the above notation, if $\tau\in\C$ is such that the functions $\{z^j_{\alpha,\tau}\}$ are well defined on an open set 
$U_\alpha$, then the $J_\tau$-holomorphic form $dz^1_{\alpha,\tau}\wedge \cdots\wedge dz^n_{\alpha,\tau}$ does not have zeros on $U_\alpha$. 
\end{lemma}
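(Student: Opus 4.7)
My plan is to reduce the statement to a short linear-algebra observation on the complexification $M_\C$ of $M$. Since $M$ is compact real analytic, it embeds as a totally real submanifold into a complex manifold $M_\C$ of complex dimension $2n$, matching the real dimension of $M$. After possibly shrinking $U_\alpha$, I can choose holomorphic coordinates $(Z^j,W^j)_{j=1,\dots,n}$ on a complex neighbourhood of $U_\alpha$ in $M_\C$ so that $M$ is locally cut out by $W^j=\overline{Z^j}$, with $Z^j|_M=z^j_\alpha$ and $W^j|_M=\bar z^j_\alpha$. The real analytic vector field $X$ extends uniquely to a holomorphic vector field $X_\C$ on a neighbourhood of $M$ in $M_\C$, whose (local) holomorphic flow $\varphi^{M_\C}_\tau$ is, wherever defined, a local biholomorphism of $M_\C$.

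The first step is to check that the hypothesis ``$z^j_{\alpha,\tau}=e^{\tau X}\cdot z^j_\alpha$ is well defined on $U_\alpha$'' in fact produces the full holomorphic flow $\varphi^{M_\C}_\tau$ on a complex neighbourhood of $U_\alpha$ in $M_\C$, with $z^j_{\alpha,\tau}(p)=Z^j(\varphi^{M_\C}_\tau(p))$ for $p\in U_\alpha$. This is a holomorphic analogue of Gr\"obner's theorem: the real-analytic coefficients $X^k z^j_\alpha$ extend to holomorphic functions on a complex neighbourhood of $U_\alpha$, and because $X$ is real one has $X^k\bar z^j_\alpha=\overline{X^k z^j_\alpha}$, so the Lie series for the $W$-components, $\sum_k\tau^k X^k\bar z^j_\alpha/k!$, converges absolutely on the same set as the series for the $Z$-components.

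With the holomorphic flow in hand, I would use that pullback by a local biholomorphism is a $\C$-linear isomorphism on holomorphic cotangent spaces, so the $n$ holomorphic $1$-forms $(\varphi^{M_\C}_\tau)^*(dZ^j)=d(Z^j\circ\varphi^{M_\C}_\tau)$ are linearly independent in $(T^{1,0}_p M_\C)^*$ at every $p$ in the flow's domain. The last ingredient is that, because $M$ is totally real in $M_\C$ with $\dim_\R M=\dim_\C M_\C$, the restriction map $(T^{1,0}_p M_\C)^*\to T^*_pM\otimes_\R\C$ is a $\C$-linear isomorphism at each $p\in M$: in the coordinates above $dZ^j|_M=dz^j_\alpha$ and $dW^j|_M=d\bar z^j_\alpha$ form a basis of $T^*_pM\otimes\C$. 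Linear independence therefore survives restriction, so the $1$-forms $dz^j_{\alpha,\tau}=d(Z^j\circ\varphi^{M_\C}_\tau)|_M$ are linearly independent at each $p\in U_\alpha$, which is precisely the statement that $dz^1_{\alpha,\tau}\wedge\cdots\wedge dz^n_{\alpha,\tau}$ is nowhere zero on $U_\alpha$.

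The main obstacle I expect is the bridge provided by the first step: verifying that convergence of the $n$ Lie series for the $z^j_\alpha$ on $U_\alpha$ actually produces a genuine holomorphic flow on a complex neighbourhood of $U_\alpha$ in $M_\C$, which requires both the holomorphic extension of the coefficients $X^k z^j_\alpha$ and the parallel convergence of the $W$-component series, for which the reality of $X$ is essential. Once this bridge is set up, the conclusion is immediate from the local-biholomorphism property of the holomorphic flow plus the restriction isomorphism for totally real submanifolds of matching dimension.
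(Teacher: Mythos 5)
Your route is genuinely different from the paper's. The paper disposes of this lemma in one line, at the level of Lie series: $e^{-\tau X}\cdot z^j_{\alpha,\tau}=z^j_\alpha$, so the operator $e^{\tau X}$ is inverted on the coordinate functions and the $z^j_{\alpha,\tau}$ still determine the original coordinates, forcing their differentials to remain linearly independent over $\C$. You instead pass to the complexification $M_\C$ and the complex-time holomorphic flow of $X_\C$ --- essentially the Burns--Lupercio--Uribe picture that the paper only brings in later, in Section \ref{s2}. Your linear-algebra endgame is fine: if $z^j_{\alpha,\tau}=Z^j\circ\varphi^{M_\C}_\tau\circ\iota$ with $\varphi^{M_\C}_\tau$ a local biholomorphism, then linear independence of the forms $d(Z^j\circ\varphi^{M_\C}_\tau)$ in $(T^{1,0}_pM_\C)^*$ survives the restriction isomorphism $(T^{1,0}_pM_\C)^*\to T^*_pM\otimes\C$ of a maximal totally real submanifold.

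The gap is the bridge step, which you flag as ``the main obstacle'' but do not close, and the two ingredients you list do not close it. The hypothesis of the lemma is only that the $n$ Lie series converge on $U_\alpha$, and the lemma is aimed precisely at the regime --- emphasized in the paragraph preceding it --- where $|\tau|$ may be large enough that the global structure $J_\tau$ fails. In that regime the complex-time integral curve of $X_\C$ through $p\in U_\alpha$ may leave $M_\C$ (which is only a germ of a neighbourhood of $M$), or leave the coordinate chart, at some time $\tau_0$ with $|\tau_0|<|\tau|$, even though the series $\sum_k\tau^kX^k(z^j_\alpha)(p)/k!$ and $\sum_k\tau^kX^k(\bar z^j_\alpha)(p)/k!$ still converge: convergence only forces the limit of the coordinates to exist in $\C^{2n}$, not to lie in the open image of the chart, so the flow need not extend to time $\tau$. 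Holomorphic extension of the coefficients $X^kz^j_\alpha$ plus reality of $X$ give you a well-defined map $U_\alpha\to\C^{2n}$ as the sum of the series, but not that this map equals $(Z,W)\circ\varphi^{M_\C}_\tau$, nor that it is an immersion --- which is exactly what your pullback argument requires. (Where the flow does exist, e.g.\ for small $|\tau|$, your argument goes through, but there the stronger nonvanishing of the full $2n$-form is already Theorem \ref{lemmalocal1}, so the lemma would have no content.) To cover the intended regime you would need either to rule out escape of the trajectories, which is false in general, or to argue as the paper does, purely with the Lie series and its inverse $e^{-\tau X}$, without ever invoking the flow on $M_\C$.
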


\begin{proof}
The action of the operator $e^{\tau X}$ {\it on holomorphic functions} can be inverted, since
$$
e^{-\tau X} z_{\alpha,\tau}^j = z_{\alpha}^j.
$$
\end{proof}

On the other hand, the zeros of $dz^1_{\alpha,\tau}\wedge \cdots\wedge dz^n_{\alpha,\tau}\wedge d\bar z^1_{\alpha,\tau}
\wedge \cdots\wedge d \bar z^n_{\alpha,\tau}$ mean that some linear combination of 
$dz_{\alpha,\tau}^j, j=1,\dots, n$, becomes real and the corresponding polarization\footnote{Recall that 
on a symplectic manifold $(M,\omega)$ a (geometric quantization) polarization, ${\mathcal P}$, is a 
Lagrangian distribution in the complexified tangent bundle $TM\otimes \C$. 
${\mathcal P}$ is said to be real if, at each point of $M$, ${\mathcal P}= \bar {\mathcal P}$. If 
$(M,\omega,J)$ is a K\"ahler manifold, then $T^{(1,0)}M$ defines a K\"ahler polarization ${\mathcal P}_J$, 
such that ${\mathcal P}_J \cap \bar{\mathcal P}_J=0$. (See, for example, \cite{Woodhouse}.)} becomes mixed.
For these values of $\tau$, 
which cannot be treated by a flow on $M_\C$ followed by projection to $M$, it is still interesting, 
for geometric quantization purposes, to study the associated mixed polarizations. In the next example, 
we illustrate this situation.

\begin{example}\label{plano}Consider the complex manifold $(\R^2,J_0)$, where $J_0$ is the homogeneous complex structure 
defined by the 
global holomorphic coordinate $z= x+\tau_0 y$, with ${\rm Im}\,\tau_0\neq 0$. Even though this is a noncompact example, it serves 
to illustrate the results above. Let $X=y\frac{\partial}{\partial x}$, so that 
$$
z_\tau = e^{\tau X} \cdot z = x + (\tau_0+\tau)y. 
$$
Letting $\tau_0=r_0+is_0, \tau=r+is, r_0,s_0,r,s\in \R$, we obtain that $\varphi_\tau:\R^2\to \R^2$ is the linear map defined 
by the matrix 
\begin{equation}\label{matrix}
\left[
\begin{array}{cc}
1 & r+r_0-\frac{(s+s_0)r_0}{s_0}\\
0 & \frac{s+s_0}{s_0} 
\end{array}\right]
\end{equation}
in the canonical basis.
This one-parameter family of diffeomorphisms defines, for $s\neq -s_0$, a linear isomorphism of $\R^2$. For 
$s=-s_0$, $\varphi_{r-is_0}$ maps 
$\R^2$ to a real dimension one linear subspace. Consider on the complex one-parameter subgroup of the formal 
group ${\rm Diff}(\R^2)_\C$,
$$
{\mathcal C}=\{e^{\tau X}, \tau\in \C\},
$$ 
the three subsets ${\mathcal C}= {\mathcal C}_+\cup{\mathcal C}_0\cup{\mathcal C}_-$ according to whether 
${\rm Im}(\tau_0+\tau)$ is 
positive, equal to zero or negative, respectively. Then, the map $e^{\tau X}\mapsto \varphi_\tau$ breaks up 
into maps
\begin{eqnarray}\nonumber
{\mathcal C}_+&\to& GL_+(\R^2)\subset Diff_{+}(\R^2)\\ \nonumber
{\mathcal C}_0&\to& M_2(\R) \subset C^\infty(\R^2,\R^2)\\ \nonumber
{\mathcal C}_{-}&\to& GL_{-}(\R^2)\subset Diff_-(\R^2).
\end{eqnarray}

Note that when $\tau$ is such that $s=-s_0$, we still have a well defined, albeit real, function
$$
z_{\tau}= x+(r_0+r)y = \bar z_\tau.
$$
At this value of $s$, the complex polarization associated to $J_\tau$ becomes real, see 
Example \ref{planosymp}.
\end{example}

\begin{remark}
Under the conditions of Theorem \ref{ajax}, note that for $J_0$-holomorphic $(k,0)$-forms
and $|\tau|$ sufficiently small we have
$$
\varphi_{\tau}^*\, f(z)\, dz^{i_1}\wedge\cdots\wedge dz^{i_k} =:  e^{\tau {\mathcal L}_{X_h}} \left( f(z)
dz^{i_1}\wedge\cdots\wedge dz^{i_k}\right) =f(e^{\tau{X_h}}\cdot z)\, dz^{i_1}_\tau\wedge\cdots\wedge dz^{i_k}_\tau, 
$$
while for $J_0$-anti-holomorphic forms of type $(0,k)$,
$$
\varphi_{\tau}^* f(\bar z) \, d \bar z^{i_1}\wedge\cdots\wedge d \bar z^{i_k} =: e^{\bar \tau {\mathcal L}_{X_h}}  \left(f(\bar z)
d\bar z^{i_1}\wedge\cdots\wedge d\bar z^{i_k}\right) = f(e^{\bar \tau{X_h}}\cdot \bar z) \, d\bar z^{i_1}_\tau\wedge\cdots\wedge d\bar z^{i_k}_\tau.
$$
Note that these expressions are convergent power series in $\tau$ and $\bar \tau$ respectively.
\end{remark}

\begin{remark}
A geometric interpretation of $\varphi_\tau$ can be given in terms of the complexification of $M$, as in \cite{BLU}. 
(See also Section \ref{s2}.)
\end{remark}

\section{Complexification of Hamiltonian flows}

Let $U\subset\R^{2n}$ be an open set. Let $J_0$ be the standard complex structure on $\R^{2n}\cong \C^n$ and
consider now $(U,J_0)$ with real analytic symplectic form $\omega$ such that $(U,\omega,J_0)$ is a K\"ahler manifold. 
For $h\in C^{\rm an}(U)$, let $X_h$ be its (real analytic) 
Hamiltonian vector field with flow $\varphi_t$. In this Section, we will consider Theorem \ref{ajax} in the context 
of complexified Hamiltonian flows and will study some additional properties valid in this case. 
Letting $z=(z^1,\dots,z^n)$ be local $J_0$-holomorphic coordinates on $U$, recall that $\omega$ is a type $(1,1)$-form.

\begin{theorem}\label{shakira}
Let $V\subset U$ be an open set and let $f\in C^{\rm an}(V)$ such that $e^{\tau X_h}$ can be applied to $f$ (see Definition \ref{canbe}) for $|\tau|<T$ for some $T>0$.
Then, the real-analytic Hamiltonian vector field of $e^{\tau X_h}\cdot f$ with respect to $\omega$ is given by the
Lie series 
\begin{equation}\label{xxx}
X_{e^{\tau {X_h}}\cdot f} = \sum_{k=0}^\infty \frac{\tau^k {\mathcal L}_{X_h}^k(X_f)}{k!} =:e^{\tau {\mathcal L}_{X_h}} X_f, \,\,\,|\tau|<T.
\end{equation}
\end{theorem}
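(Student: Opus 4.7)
The plan is to reduce the identity (\ref{xxx}) to two ingredients: the standard correspondence between Poisson brackets on functions and Lie brackets on Hamiltonian vector fields, and termwise spatial differentiation of the Gr\"obner series.

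First I would recall the symplectic identity, valid for any real analytic $g$ on $V$,
$$X_{X_h(g)} \;=\; [X_h, X_g] \;=\; \mathcal L_{X_h} X_g,$$
which expresses that the map $g \mapsto X_g = \omega^{-1}(dg, \cdot)$ intertwines the Poisson bracket $X_h(g) = \{h,g\}$ with the Lie bracket on vector fields. An immediate induction on $k$ then yields
$$X_{X_h^k(f)} \;=\; \mathcal L_{X_h}^k X_f \qquad \text{for all } k \geq 0,$$
so the right-hand side of (\ref{xxx}) is what one obtains by applying $X_{(\cdot)}$ coefficient by coefficient to the series $e^{\tau X_h}\cdot f = \sum_k \tau^k X_h^k(f)/k!$. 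It remains to justify the interchange of $X_{(\cdot)}$ with this infinite sum.

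Since $g \mapsto X_g$ involves only first-order partial derivatives in the coordinates of $V$, the interchange reduces to termwise differentiation of $\sum_k \tau^k X_h^k(f)/k!$ in the spatial variables. By Lemma \ref{grobner} this series defines a jointly real analytic function of $(x,\tau)$ on $V \times D_T$, so Cauchy's inequalities for convergent power series in several variables permit termwise spatial differentiation, with the resulting vector-field-valued series absolutely convergent on $V \times D_{T'}$ for every $T' < T$ (and hence on $V \times D_T$ after possibly passing to a slightly smaller open subset of $V$). This joint analyticity step is the only nontrivial point; once it is in hand, (\ref{xxx}) follows directly from the Poisson--Lie bracket correspondence applied coefficient by coefficient.
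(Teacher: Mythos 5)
Your argument is correct. The core of your proof — the coefficient identity $X_{X_h^k(f)}=\mathcal{L}_{X_h}^k X_f$, obtained by induction from $X_{X_h(g)}=[X_h,X_g]$ — is exactly the observation the paper's proof starts from. Where you diverge is in how the infinite sum is handled: the paper first establishes $X_{e^{tX_h}\cdot f}=e^{t\mathcal{L}_{X_h}}X_f$ for \emph{real} $t$ (where $e^{tX_h}\cdot f=f\circ\varphi_t$ and the identity is just the series expansion of $(\varphi_t^{-1})_*X_f=X_{\varphi_t^*f}$ for the genuine symplectomorphism $\varphi_t$), and then analytically continues in $t$ to complex $\tau$. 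You instead work directly with the complex-$\tau$ series and justify applying the first-order differential operator $g\mapsto X_g$ term by term, using the joint real analyticity of $(x,\tau)\mapsto e^{\tau X_h}\cdot f$ from Lemma \ref{grobner} together with Cauchy estimates to permit termwise spatial differentiation. Both are valid; your route is more self-contained in that it never invokes the real-time flow or the principle of analytic continuation, at the cost of having to make the interchange-of-limits step explicit (which the paper's appeal to analytic continuation quietly absorbs). The only point to watch is that Lemma \ref{grobner} as stated gives joint analyticity on $M\times D_0$ for the Gr\"obner radius $T_f$, so strictly you should note that the same argument applies on $V\times D_{T'}$ for each $T'<T$ under the hypothesis that the series converges for $|\tau|<T$; your parenthetical remark already covers this.
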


\begin{proof}
Observe that, for $k\in \N$, $g\in C^\infty(V)$, $$\left({\mathcal L}_{X_h}^k X_f\right)(g)=X_{X_h^k(f)}(g).$$
From real analiticity of $\varphi_t$, for real time $t$, we then have 
\begin{equation}\label{xxxx}
X_{e^{t {X_h}}\cdot f} = e^{t {\mathcal L}_{X_h}} X_f .
\end{equation}
The equality (\ref{xxx}) follows by taking the analytic continuation of 
(\ref{xxxx}) in $t$.
\end{proof}

\begin{remark}\label{notsympl}
For real time $t$, as long as the flow of $X_h$ is defined, (\ref{xxxx}) is the statement, for the symplectomorphism $\varphi_t$ of 
Theorem \ref{ajax},
of the equality $(\varphi_t^{-1})_* X_f = X_{\varphi_t^* f}$. However, note that for ${\rm Im}\tau\neq 0$, 
$\varphi_\tau$  will not, in general, be a symplectomorphism. In this case, for $f$ not $J_0$-holomorphic, we have, in general,
$\varphi_\tau^* f\neq e^{\tau X_h}\cdot f.$ 
\end{remark}

\begin{remark}
Note that 
$\bar z_\tau = \overline{(e^{\tau X_h} \cdot z)}= e^{\bar \tau X_h}\cdot \bar z \neq e^{\tau X_h}\cdot \bar z=\bar z_{\bar \tau}$. 
\end{remark}

We have, for the Hamiltonian vector fields for the holomorphic coordinate functions,

\begin{lemma}  \label{liederivative}
Under the conditions of Theorem \ref{shakira},
$$
e^{\tau {\mathcal L}_{X_h}} X_{z^i} = X_{z_\tau^i},\,\,e^{\bar \tau {\mathcal L}_{X_h}} X_{\bar z^i} = X_{\bar z_\tau^i}
$$
where the left hand sides are convergent power series (in $\tau$ and $\bar \tau$ respectively).
\end{lemma}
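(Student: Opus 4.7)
The plan is to obtain both identities as immediate corollaries of Theorem \ref{shakira}, applied to the particular functions $f = z^i$ and $f = \bar z^i$, both viewed as complex-valued real analytic functions in $C^{\rm an}(V)$. No new analytic input beyond that theorem should be needed; the content of the lemma is just that the identity (\ref{xxx}) specializes correctly to the coordinate functions, matching up with the definitions of $z_\tau^i$ and $\bar z_\tau^i$ introduced earlier.

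For the first equation, I would invoke Lemma \ref{grobner} to fix a common radius $T > 0$ on which $e^{\tau X_h}\cdot z^i$ is a convergent power series in $\tau$ for every $i = 1,\dots,n$ (take the minimum of the individual radii). Since $z_\tau^i$ is \emph{defined} as precisely this power series in Theorem \ref{lemmalocal1}, substituting $f = z^i$ into (\ref{xxx}) yields $X_{z_\tau^i} = X_{e^{\tau X_h}\cdot z^i} = e^{\tau \mathcal{L}_{X_h}} X_{z^i}$ directly.

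The second equation requires a small but crucial observation, which I would flag as the main (and essentially only) subtlety: the parameter on the left-hand side is $\bar\tau$, not $\tau$, because $\bar z_\tau^i$ is defined by (\ref{draghi}) as $e^{\bar\tau X_h}\cdot \bar z^i$. The plan is therefore to apply Theorem \ref{shakira} with $f = \bar z^i \in C^{\rm an}(V)$ and with the formal complex parameter set equal to $\bar\tau$, shrinking $T$ if necessary so that the series in $\bar\tau$ converges for all $i$. This gives $X_{\bar z_\tau^i} = X_{e^{\bar\tau X_h}\cdot \bar z^i} = e^{\bar\tau \mathcal{L}_{X_h}} X_{\bar z^i}$, as claimed.

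I do not expect a genuine obstacle in this argument: the only bookkeeping step is the simultaneous verification that the $2n$ series obtained from $z^i, \bar z^i$ all converge on a common disk about the origin in $\tau$, which follows from applying Lemma \ref{grobner} to each of the $2n$ functions separately and taking the minimum of the resulting radii. The holomorphic/antiholomorphic split of the parameter between $\tau$ and $\bar\tau$ is dictated purely by the convention adopted in (\ref{draghi}), so the lemma is really a compatibility statement between Theorem \ref{shakira} and those earlier definitions.
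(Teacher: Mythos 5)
Your argument is correct: since $z^i,\bar z^i\in C^{\rm an}(V)$ and $z_\tau^i$, $\bar z_\tau^i$ are by definition $e^{\tau X_h}\cdot z^i$ and $e^{\bar\tau X_h}\cdot\bar z^i$, specializing the identity $X_{e^{\sigma X_h}\cdot f}=e^{\sigma\mathcal{L}_{X_h}}X_f$ of Theorem \ref{shakira} to $f=z^i$ with $\sigma=\tau$ and to $f=\bar z^i$ with $\sigma=\bar\tau$ gives exactly the two claimed equalities, and your attention to the $\tau$ versus $\bar\tau$ bookkeeping forced by (\ref{draghi}) is the right point to flag. The route is, however, not the one the paper takes: rather than citing Theorem \ref{shakira}, the paper gives a short independent computation at the level of differential forms, writing
$$
dz^i_\tau = d\,e^{\tau X_h}\cdot z^i = e^{\tau\mathcal{L}_{X_h}}dz^i = e^{\tau\mathcal{L}_{X_h}}\, i_{X_{z^i}}\omega = i_{e^{\tau\mathcal{L}_{X_h}}X_{z^i}}\omega,
$$
where the last step uses $\mathcal{L}_{X_h}\omega=0$, and then reads off $X_{z^i_\tau}=e^{\tau\mathcal{L}_{X_h}}X_{z^i}$ from the defining relation $i_{X_f}\omega=df$. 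The two arguments ultimately rest on the same analytic-continuation machinery (the paper's proof of Theorem \ref{shakira} itself proceeds by continuing the real-time identity $(\mathcal{L}_{X_h}^k X_f)(g)=X_{X_h^k(f)}(g)$), so neither is more general; yours is more economical in that it reuses the theorem whose hypotheses the lemma already assumes, while the paper's version makes the role of $\mathcal{L}_{X_h}\omega=0$ and of the contraction with $\omega$ explicit, which is the mechanism reused later in the proof of Theorem \ref{kahlerfamily}. Both are valid proofs of the lemma.
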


\begin{proof}
We have ${\mathcal L}_{X_h}\omega =0$ and
$$
dz^i_\tau = d e^{\tau {X_h}} \cdot z^i = e^{\tau {\mathcal L}_{X_h}}  dz^i = 
e^{\tau {\mathcal L}_{X_h}} i_{X_{z^i}} \omega = i_{e^{\tau {\mathcal L}_{X_h}}X_{z^i} }\omega,
$$
so that $X_{z^i_\tau}=e^{\tau {\mathcal L}_{X_h}}X_{z^i}, i=1,\dots,n $. Since $z^i_\tau$ is given by an 
absolutely and uniformly convergent on compact subsets Lie series, all 
the expressions make sense as convergent series.
\end{proof}

\section{Action on K\"ahler structures}

\begin{theorem}\label{kahlerfamily}
Let $(M,\omega,J_0,\gamma_0)$ be a real analytic compact K\"ahler manifold with K\"ahler form $\omega$, 
complex structure $J_0$ and Riemannian metric $\gamma_0$. 
Assume $\omega,J_0,\gamma_0$ are analytic. Let $h\in C^{\rm an} (M)$. 

There exists $T>0$ such that for each 
$$
\tau\in D_T :=\{\tau\in\C: |\tau| <T\}
$$
one has that:
\begin{itemize}
\item[(i)]
$(M,\omega,J_\tau)$ is a K\"ahler manifold, where $J_\tau=\varphi_{\tau}^*J_0$ and 
$\varphi_\tau$ is the biholomorphism of Theorem \ref{ajax}. 

\item[(ii)] Let 
$\kappa_0$ be a local analytic K\"ahler potential for $(M,\omega,J_0)$. A local K\"ahler potential for 
$(M,\omega,J_\tau)$ is then given by 
\be
\label{formula0}
\kappa_\tau=-2{\rm Im}\,\, \psi_\tau, 
\ee
with
\begin{equation}\label{formula}
\psi_\tau = -\frac{i}{2}\,e^{\tau X_h}\cdot \kappa_0 + \tau h - \alpha_\tau ,
\end{equation}
where $\alpha_\tau$ is the analytic continuation in $t$ of 
\be
\label{alfa}
\alpha_t = \int_0^t \, e^{t' X_h}(\theta(X_h)) \, dt' \ ,
\ee
$\theta$ is the real local potential for $-\omega$ defined by 
$\theta^{(0,1)}=\frac{i}{2}\bar\partial_0 \kappa_0$, with $\bar\partial_0$ the $\bar\partial$-operator 
relative to the complex struture $J_0$.  
\end{itemize}
\end{theorem}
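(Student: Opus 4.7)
As a first reduction, I claim that (i) follows from (ii): once the identity $\omega = i\partial_\tau\bar\partial_\tau\kappa_\tau$ is established, $\omega$ is automatically of type $(1,1)$ with respect to $J_\tau$, and its positivity at $\tau=0$ propagates to a neighbourhood of $0$ in $\C$ by continuity of $J_\tau$ in $\tau$ together with compactness of $M$. So the heart of the proof is verifying the K\"ahler potential formula.

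I would anchor the argument at real time. For $\tau=t\in(-T,T)\cap\R$, $\varphi_t$ is simultaneously a Hamiltonian symplectomorphism (so that $\varphi_t^*\omega=\omega$) and, by Theorem \ref{ajax}, a biholomorphism $(M,J_t)\to(M,J_0)$; hence $\varphi_t^*\kappa_0 = e^{tX_h}\cdot\kappa_0$ is a local K\"ahler potential for $(M,\omega,J_t)$. Inspection of (\ref{formula}) shows that $e^{tX_h}\cdot\kappa_0$, $th$ and $\alpha_t$ are each real-valued for real $t$, so ${\rm Im}\,\psi_t=-\tfrac{1}{2}\,e^{tX_h}\cdot\kappa_0$ and $-2\,{\rm Im}\,\psi_t = \varphi_t^*\kappa_0$, matching the expected potential.

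To handle complex $\tau$, I would work directly in local $J_0$-holomorphic coordinates. Writing $\kappa_\tau = i(\psi_\tau - \bar\psi_\tau)$ with $\bar\psi_\tau = \tfrac{i}{2}\,e^{\bar\tau X_h}\cdot\kappa_0+\bar\tau h - \alpha_{\bar\tau}$, I would compute $\partial_\tau\bar\partial_\tau\psi_\tau$ and $\partial_\tau\bar\partial_\tau\bar\psi_\tau$ using (\ref{merkl}), (\ref{draghi}), Lemma \ref{liederivative} and the pullback formulas for $J_0$-holomorphic $(k,0)$- and antiholomorphic $(0,k)$-forms from the Remarks in Section 2. The role of $\alpha_\tau$ is dictated by Cartan's magic formula applied to $\theta$: $\mathcal{L}_{X_h}\theta = i_{X_h}d\theta + d(\theta(X_h)) = -dh + d(\theta(X_h))$ (using $i_{X_h}\omega=dh$ and $d\theta=-\omega$), so that $\frac{d}{d\tau}\alpha_\tau = e^{\tau X_h}\cdot(\theta(X_h))$ is precisely the correction needed to balance the $\tau\,dh$ contribution arising when one differentiates $\tau h$. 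After the bookkeeping, $d\psi_\tau$ produces, when projected to the $J_\tau$-$(0,1)$ component, exactly $\tfrac{i}{2}\bar\partial_\tau(\varphi_\tau^*\kappa_0)$; applying $\partial_\tau$ and combining with the conjugate calculation for $d\bar\psi_\tau$ yields $i\partial_\tau\bar\partial_\tau\kappa_\tau=\omega$.

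The main obstacle is keeping track of the distinction between $e^{\tau X_h}\cdot\kappa_0 = \kappa_0(z_\tau,\bar z_{\bar\tau})$, which is holomorphic in $\tau$, and $\varphi_\tau^*\kappa_0 = \kappa_0(z_\tau,\bar z_\tau)$, which depends on both $\tau$ and $\bar\tau$: these coincide on the real axis but differ off it. It is the precise cancellation between the ``holomorphic in $\tau$'' pieces coming from $e^{\tau X_h}\cdot\kappa_0$ and the mixed-type correction $\tau h - \alpha_\tau$ that turns a pullback potential (appropriate for $\varphi_\tau^*\omega$) into a genuine potential for $\omega$ on $(M,\omega,J_\tau)$.
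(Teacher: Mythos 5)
Your overall architecture --- anchoring at real time, continuing analytically, and using Cartan's formula ${\mathcal L}_{X_h}\theta = i_{X_h}d\theta + d(\theta(X_h)) = -dh + d(\theta(X_h))$ to explain why $\alpha_\tau$ must appear --- matches the paper, and your warning about the difference between $e^{\tau X_h}\cdot\kappa_0=\kappa_0(z_\tau,\bar z_{\bar\tau})$ and $\varphi_\tau^*\kappa_0=\kappa_0(z_\tau,\bar z_\tau)$ is exactly the right thing to worry about. But there is a genuine gap, and it sits precisely where you write ``after the bookkeeping.'' You invert the logical order of the paper: you propose to deduce (i) from (ii), whereas the verification of (ii) cannot get off the ground without (i). To project $d\psi_\tau$ (or $\theta$) onto its $J_\tau$-$(0,1)$ component you need a frame for the $(0,1)_\tau$ tangent space, and the only computable one is $\{X_{z^i_\tau}\}_{i=1,\dots,n}$ with $X_{z^i_\tau}=e^{\tau{\mathcal L}_{X_h}}X_{z^i}$ (Lemma \ref{liederivative}). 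That these vector fields span the $(0,1)_\tau$ space is equivalent to $\{z^i_\tau,z^j_\tau\}=0$, i.e.\ to $\omega$ having no $(2,0)_\tau$ component --- which is statement (i). The paper establishes this first, from ${\mathcal L}_{X_h}\omega=0$ together with the type-$(1,1)$ property of $\omega$ with respect to $J_0$, by analytically continuing the real-time identity $\{z^i_t,z^j_t\}=0$. Nothing in your sketch supplies this step, and without it the identity $i\partial_\tau\bar\partial_\tau\kappa_\tau=\omega$ that you want to check in coordinates would have to be weighed against unknown $(2,0)_\tau$ and $(0,2)_\tau$ components of $\omega$; the claimed reduction of (i) to (ii) is therefore circular as it stands.

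Once (i) is in place the rest is shorter than your plan suggests: $\bar\partial_\tau\theta^{(0,1)_\tau}=0$, so the $\bar\partial$-lemma gives a local $\psi_\tau$ with $\theta^{(0,1)_\tau}=\bar\partial_\tau\psi_\tau$ and $\kappa_\tau=-2\,{\rm Im}\,\psi_\tau$; the explicit formula then follows by evaluating $e^{\tau{\mathcal L}_{X_h}}\theta=\theta-\tau\,dh+d\alpha_\tau$ on the frame $X_{z^i_\tau}$, using that $e^{\tau{\mathcal L}_{X_h}}$ acts as an automorphism of the tensor algebra so that $\bigl(e^{\tau{\mathcal L}_{X_h}}\theta\bigr)\bigl(e^{\tau{\mathcal L}_{X_h}}X_{z^i}\bigr)=e^{\tau X_h}\bigl(\theta(X_{z^i})\bigr)$. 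This replaces the $\partial_\tau\bar\partial_\tau$ computation of the separate pieces entirely. Finally, note that your real-time consistency check, while correct, has limited force: since $\tau h-\alpha_\tau$ is real-valued on the real axis, it drops out of ${\rm Im}\,\psi_t$ there, so the check cannot by itself certify the correction terms for ${\rm Im}\,\tau\neq 0$.
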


\begin{remark} One has $d\alpha_\tau= e^{\tau d\circ {i}_{X_h}}\theta -\theta$ and also
$$
\alpha_\tau = \sum_{k=1}^\infty \frac{\tau^k}{k!} X_h^{k-1}(\theta(X_h)),
$$
for $\alpha_\tau$ in (\ref{alfa}). 
\end{remark}

\begin{proof}
To prove $(i)$ it is enough to show that $\omega$ is of type $(1,1)$ with respect to $J_\tau$. Positivity, 
for small enough $|\tau|$, follows from continuity at $\tau=0$. From Lemma \ref{liederivative},
$$
X_{z_{\tau}^i} = e^{\tau {\mathcal L}_{X_h}} X_{z^i}.
$$
Note that the local vector fields $\{X_{z_{\tau}^i}\}_{i=1,\dots,n}$ are linearly independent at every point of 
an open neighborhood. 
Since ${\mathcal L}_{X_h} \omega=0$ and $\omega$ is of type $(1,1)$ with respect to $J_0$, we obtain
$$
\{z_{\tau}^i,z_{\tau}^j\}=0, i,j=1,\dots,n.
$$
Therefore,
$$
dz_{\tau}^i (X_{z_{\tau}^j}) = \omega (X_{z_{\tau}^i},X_{z_{\tau}^j})=0,
$$
so that $\{X_{z_{\tau}^i}\}_{i=1,\dots,n}$ generate the $(0,1)$ tangent space with respect to $J_\tau.$
Therefore, $\omega$ is of type $(1,1)$ with respect to $J_\tau$ which concludes the proof of $(i)$.

To prove $(ii)$, let $\theta = \theta^{(1,0)_\tau}+\theta^{(0,1)_\tau}$, 
be the decomposition of $\theta$ into $(1,0)$ and $(0,1)$ parts with 
respect to $J_\tau$. Since $\omega$ is of type $(1,1)$ with respect to $J_\tau$, 
$\bar\partial_\tau \theta^{(0,1)_\tau}=0$, where $\partial_\tau, \bar\partial_\tau$ are the $\partial,\bar\partial$-operators 
with respect to $J_\tau.$ Therefore, by the $\bar \partial$-lemma, $\theta^{(0,1)_\tau}=\bar\partial_\tau \psi_\tau$ for 
some locally defined complex valued analytic function $\psi_\tau.$ 
A K\"ahler potential for $\omega$ with respect to $J_\tau$ will then be given by $\kappa_\tau = -2{\rm Im}\,\psi_\tau$. 
Recall that $\{X_{z_{\tau}^i}\}_{i=1,\dots,n}$ is a basis of the $(0,1)$ tangent space for $J_\tau$. 
Since $\omega=-d\theta$, we have, expanding in powers of $\tau$,
$$
e^{\tau {\mathcal L}_{X_h}} \theta = \theta -\tau dh + d\alpha_\tau.
$$
Then, since $\theta^{(0,1)} =-\frac{i}{2}\bar\partial_0 \kappa_0$ and $X_{z_{\tau}^i}=e^{\tau {\mathcal L}_{X_h}}X_{z^i}$, 
the validity of $(ii)$ follows from the equality
$$
\theta (X_{z_{\tau}^i}) =-\frac{i}{2} X_{z_{\tau}^i}(e^{\tau X_h}\cdot \kappa_0) + \tau 
dh(X_{z_{\tau}^i}) - d\alpha_\tau (X_{z_{\tau}^i}), i=1,\dots,n,
$$
where we use $e^{\tau{\mathcal L}_{X_h}}\theta (X_{z_{\tau}^i})=e^{\tau{\mathcal L}_{X_h}}
\theta(e^{\tau{\mathcal L}_{X_h}}X_{z^i})=e^{\tau X_h}\theta(X_{z^i})$ due to the fact that, as a consequence of Remark 
\ref{autom}, $e^{\tau {\mathcal L}_{X_h}}$ acts as a (local) automorphism of the algebras of tensor fields on $M$.
\end{proof}

Note that $\varphi_t$ defines a symplectomorphism  of $(M,\omega)$ for all $t\in \R$. However, 
for complex $\tau$, $\varphi_\tau$ will not, in general, be a symplectomorphism, as already 
noted in Remark \ref{notsympl}. 
Therefore, the 
K\"ahler structures  $(M,J_0,\omega,\gamma_0)$ and $(M,J_\tau,\omega,\gamma_\tau)$ will, in general, be non-equivalent. 

That the evolution in real time does not change the diffeomorphism equivalence class of the K\"ahler structure, 
can be checked explicitly by verifying that under the flow of the Hamiltonian symplectomorphisms $\varphi_t, t\in \R$
the K\"ahler potential just evolves by composition with the flow. In fact,

\begin{proposition}\label{realnada}
Under the conditions of Theorem \ref{kahlerfamily}, let $\tau_0\in D_T, s\in \R$ such that $\tau+s \in D_T$. We then have,
$$
\kappa_{\tau+s}=\varphi_s^*\kappa_\tau.
$$
\end{proposition}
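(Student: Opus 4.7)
The plan is to work at the level of the holomorphic potentials $\psi_\tau$ from Theorem \ref{kahlerfamily} and show that $\psi_{\tau+s}-\varphi_s^*\psi_\tau$ is real-valued, so that its imaginary part vanishes and hence $\kappa_{\tau+s}=-2\,\mathrm{Im}\,\psi_{\tau+s}$ agrees with $-2\,\mathrm{Im}(\varphi_s^*\psi_\tau)=\varphi_s^*\kappa_\tau$ (using that $\varphi_s$, for $s\in\R$, is a real diffeomorphism so that pullback commutes with $\mathrm{Im}$).

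First I would pull back each of the three pieces of $\psi_\tau=-\tfrac{i}{2}\,e^{\tau X_h}\cdot\kappa_0+\tau h-\alpha_\tau$. For the middle term, $\varphi_s^* h=h$ since $h$ is constant along its own Hamiltonian flow. For the first term I would use the fact that, on analytic functions and for $s\in\R$, the Gröbner operator $e^{sX_h}$ coincides with $\varphi_s^*$, and that the one-parameter family of operators $\{e^{\sigma X_h}\}$ satisfies the composition law $e^{sX_h}\circ e^{\tau X_h}=e^{(s+\tau)X_h}$ whenever both sides converge (this is immediate for $s,\tau\in\R$ from the real flow property, and extends to $\tau\in D_T$ with $\tau+s\in D_T$ by analytic continuation in $\tau$). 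Hence
\[
\varphi_s^*\bigl(e^{\tau X_h}\cdot\kappa_0\bigr)=e^{(\tau+s)X_h}\cdot\kappa_0.
\]

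The main step is the cocycle identity for $\alpha$. Using the integral representation of $\alpha_\tau$ (valid after analytic continuation) and splitting the path from $0$ to $\tau+s$ along $[0,s]\cup[s,s+\tau]$, the substitution $t'=s+u$ gives
\[
\alpha_{\tau+s}=\int_0^s e^{t'X_h}(\theta(X_h))\,dt'+e^{sX_h}\int_0^\tau e^{uX_h}(\theta(X_h))\,du=\alpha_s+\varphi_s^*\alpha_\tau.
\]
Combining these computations yields
\[
\psi_{\tau+s}-\varphi_s^*\psi_\tau=sh-\alpha_s.
\]
Since $h$ is real, $s\in\R$, and $\alpha_s$ (for real $s$) is the integral of the real function $e^{t'X_h}(\theta(X_h))$ (recall $\theta$ is real and $X_h$ is a real vector field), the right-hand side is real, so taking $-2\,\mathrm{Im}$ of both sides gives the claim.

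The main technical obstacle is the justification of the two manipulations above: the identity $\varphi_s^*\circ e^{\tau X_h}=e^{(s+\tau)X_h}$ on $C^{\mathrm{an}}$ and the integral splitting for $\alpha_{\tau+s}$. Both rely on viewing the Gröbner series as honest holomorphic functions of $\tau$ on $D_T$ (Lemma \ref{grobner}) and then invoking the identity theorem for analytic functions starting from the real-variable case, where everything reduces to the standard flow property of $\varphi_t$ and the fundamental theorem of calculus. Once those two analytic-continuation arguments are in place, the rest is algebraic manipulation.
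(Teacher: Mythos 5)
Your proposal is correct and follows essentially the same route as the paper's proof: the key ingredients in both are the cocycle identity $\alpha_{\tau+s}=\alpha_s+e^{sX_h}\cdot\alpha_\tau$ (obtained for real times and extended by analytic continuation in $\tau$), the invariance $e^{sX_h}\cdot h=h$, and the reality of $\alpha_s$ for real $s$, so that the discrepancy $sh-\alpha_s$ disappears upon taking $-2\,\mathrm{Im}$. The paper states these steps more tersely ("it is straightforward to verify"), whereas you have simply written out the algebra explicitly.
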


\begin{proof}
From formula (\ref{alfa}) it is easy to check that, for real $t,s$, one has
$$
\varphi_s^* \alpha_{t} = e^{sX_h}\cdot \alpha_t = \alpha_{t+s}-\alpha_s.
$$
By analytic continuation in $t$, we obtain, 
$$
e^{sX_h}\alpha_\tau = \alpha_{\tau +s}-\alpha_s.
$$ 
Since, for real $s$, we have $\alpha_s = \bar \alpha_s$, and using $X_h(h)=0$, it is straightforward to verify that
$$
\kappa_{\tau +s} = e^{sX_h} \kappa_\tau.
$$
\end{proof}

If $M$ is not compact or if $|\tau|$ becomes too large, in general the maps $e^{\tau X_h}$ may 
develop singularities, even if they stay as local biholomorphisms on large subsets of $M$. 
Also, on some subsets of $X$ one may lose the positivity of the metric $\gamma_\tau$. Nevertheless, 
it can still be very useful to consider these maps and their action on the set of 
polarizations of the symplectic manifold $(M,\omega)$. Note that the action of the diffeomorphisms 
$\varphi_\tau$ on polarizations is given by the following 

\begin{theorem} Under the conditions of Theorem \ref{kahlerfamily}, 
the K\"ahler polarization (that is the $(0,1)$-tangent space) 
for $(M,\omega,J_\tau)$ is given by
\be
\label{formula2}
{\mathcal P}_\tau = (\varphi_{\tau}^{-1})_* {\mathcal P}_0 = e^{\tau {\mathcal L}_{X_h}} {\mathcal P}_0,
\ee
where this expression can be interpreted as a convergent power series in $\tau$ 
if $e^{\tau {\mathcal L}_{X_h}}$
is applied to appropriate sections of ${\mathcal P}_0$ such as 
$X_{z^i}, i=1,\dots,n$, where $\{z^i\}_{i=1,\dots,n}$ are local holomorphic coordinates on $(M,J_0)$.
\end{theorem}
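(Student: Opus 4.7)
The plan is to establish the two equalities separately, treating the pushforward equality geometrically and the Lie-series equality via the Gr\"obner formalism.

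First I would verify that $\mathcal{P}_\tau = (\varphi_\tau^{-1})_* \mathcal{P}_0$. By Theorem \ref{ajax}, $\varphi_\tau : (M, J_\tau) \to (M, J_0)$ is a biholomorphism, so its differential intertwines the two complex structures, $d\varphi_\tau \circ J_\tau = J_0 \circ d\varphi_\tau$. Consequently $d\varphi_\tau$ sends the $J_\tau$-antiholomorphic tangent bundle onto the $J_0$-antiholomorphic tangent bundle, i.e.\ $(\varphi_\tau)_* \mathcal{P}_\tau = \mathcal{P}_0$, which gives the first equality. (This can also be seen at the level of sections: $\varphi_\tau^* z^i = z^i_\tau$ by the construction of Theorem \ref{ajax}, and $\mathcal{P}_\tau$ is the common annihilator of $\{dz^i_\tau\}$ while $\mathcal{P}_0$ is the common annihilator of $\{dz^i\}$.)

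Second, to identify this with $e^{\tau \mathcal{L}_{X_h}} \mathcal{P}_0$, I would recall from the proof of Theorem \ref{kahlerfamily} that, locally, $\mathcal{P}_\tau$ is generated by the Hamiltonian vector fields $\{X_{z^i_\tau}\}_{i=1,\dots,n}$ of the $J_\tau$-holomorphic coordinates $z^i_\tau = e^{\tau X_h}\cdot z^i$, while $\mathcal{P}_0$ is generated by $\{X_{z^i}\}_{i=1,\dots,n}$. Lemma \ref{liederivative} then gives the termwise identification
$$
X_{z^i_\tau} = e^{\tau \mathcal{L}_{X_h}} X_{z^i},
$$
valid as an absolutely convergent power series in $\tau$ for $|\tau| < T$. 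Applying this generator by generator yields the second equality, understood in the precise sense stated in the theorem: $e^{\tau \mathcal{L}_{X_h}}$ is applied to the distinguished local sections $X_{z^i}$ of $\mathcal{P}_0$, producing local generators of $\mathcal{P}_\tau$.

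The main conceptual point to watch is the interpretation of the symbol $e^{\tau \mathcal{L}_{X_h}} \mathcal{P}_0$. For complex $\tau$ with $\operatorname{Im}\tau \neq 0$, by Remark \ref{notsympl} the map $\varphi_\tau$ is not a symplectomorphism, so $e^{\tau \mathcal{L}_{X_h}}$ cannot be globally realized as the pushforward of a flow acting on arbitrary smooth sections of $\mathcal{P}_0$. What does make sense is the termwise action on real analytic sections for which the Lie series converges, and the content of the theorem is precisely that the basic local sections $X_{z^i}$ are of this type, thanks to Lemma \ref{grobner} and Lemma \ref{liederivative}. Once this is clarified, no further calculation is needed beyond citing these two lemmas and Theorem \ref{ajax}.
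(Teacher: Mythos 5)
Your proof is correct and follows essentially the same route as the paper: the paper's own argument consists precisely of observing that $\{X_{z^i_\tau}\}$ is a basis of the $(0,1)$-tangent space for $J_\tau$ (established in the proof of Theorem \ref{kahlerfamily}) together with the identity $X_{z^i_\tau}=e^{\tau\mathcal{L}_{X_h}}X_{z^i}$ from Lemma \ref{liederivative}. Your additional explicit verification of the first equality $\mathcal{P}_\tau=(\varphi_\tau^{-1})_*\mathcal{P}_0$ via the intertwining property of the biholomorphism, and your remark on the correct interpretation of $e^{\tau\mathcal{L}_{X_h}}$ for non-real $\tau$, are sound and merely make explicit what the paper leaves implicit.
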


\begin{proof}
The $(0,1)$-tangent space with respect to $J_\tau$ has a basis 
$\{X_{z_{\tau}^i}\}_{i=1,\dots,n}$, where $X_{z_{\tau}^i}=e^{\tau {\mathcal L}_{X_h}} 
X_{z^i}$ and $\{X_{z^i}\}_{i=1,\dots,n}$ is a basis of the $(0,1)$-tangent space 
with respect to $J_0.$
\end{proof}

With a view to the application to half-form quantization, note also that we have the following result.

\begin{proposition}
Under the conditions of Theorem \ref{kahlerfamily}, the fiber of canonical bundle $K_\tau$ of $(M,J_\tau)$ 
is generated at each point by
$$
\Omega_\tau = e^{\tau {\mathcal L}_{X_h}} \Omega_0,
$$
where $\Omega_0$ generates the fiber of the canonical bundle $K_0$ of $(M,J_0)$ and where 
the expression makes sense as a convergent Lie series if $e^{\tau {\mathcal L}_{X_h}}$ is 
applied to appropriate local sections of $K_0$ such as $\Omega_0= dz^1\wedge \cdots \wedge dz^n$.
\end{proposition}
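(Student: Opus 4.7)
The strategy is to reduce everything to a local statement in $J_0$-holomorphic coordinates and then invoke two ingredients already established: the formula for how $\varphi_\tau^*$ acts on $(k,0)$-forms (the remark following Theorem \ref{lemmalocal1}), and the fact (Theorem \ref{lemmalocal1}) that the functions $z^i_\tau = e^{\tau X_h}\cdot z^i$ form a $J_\tau$-holomorphic coordinate system on some open neighbourhood. Since the canonical bundle $K_\tau$ is the line bundle of $(n,0)$-forms with respect to $J_\tau$, a local generating section is precisely a nonvanishing $J_\tau$-holomorphic $n$-form.

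First I would work locally: pick a point $p\in M$ and a chart of $J_0$-holomorphic coordinates $\{z^i\}_{i=1,\dots,n}$ on a neighbourhood of $p$, so that $\Omega_0 = dz^1\wedge\cdots\wedge dz^n$ is a local generator of $K_0$. Next, I would apply the remark stating that, for $|\tau|$ sufficiently small,
\[
e^{\tau\mathcal{L}_{X_h}}\bigl(dz^{i_1}\wedge\cdots\wedge dz^{i_k}\bigr)= dz^{i_1}_\tau\wedge\cdots\wedge dz^{i_k}_\tau,
\]
so that
\[
\Omega_\tau := e^{\tau\mathcal{L}_{X_h}}\Omega_0 = dz^1_\tau\wedge\cdots\wedge dz^n_\tau.
\]
Convergence of the series in $\tau$ is inherited from the componentwise convergence of $z^i_\tau = e^{\tau X_h}\cdot z^i$ given by Lemma \ref{grobner}, combined with the fact that $d$ commutes with $e^{\tau\mathcal{L}_{X_h}}$ because $\mathcal{L}_{X_h}$ does; this justifies rewriting the power series as a wedge of $d z^i_\tau$.

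Then I would verify the two claims about $\Omega_\tau$. By Theorem \ref{lemmalocal1}, the functions $\{z^i_\tau\}$ are $J_\tau$-holomorphic coordinates, so each $dz^i_\tau$ is a $(1,0)$-form with respect to $J_\tau$; hence $\Omega_\tau$ is a section of $K_\tau$. The same theorem ensures the $dz^i_\tau$ are linearly independent at every point of a (possibly smaller) neighbourhood, so $\Omega_\tau$ is nowhere vanishing there and therefore generates the fiber of $K_\tau$ at each such point. Globality of the statement follows by covering $M$ with finitely many $J_0$-holomorphic charts and noting, as in the proof of Theorem \ref{ajax}, that transition functions are compatible with the action of $e^{\tau\mathcal{L}_{X_h}}$ so the local generators patch in the same way as the $J_\tau$-holomorphic coordinates do.

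There is essentially no hard step here: the content of the proposition is almost entirely packaged into the remark on pullbacks of $(k,0)$-forms together with Theorem \ref{lemmalocal1}. The only item requiring a moment of care is making sure that ``generates the fiber of $K_\tau$'' is interpreted locally, since $e^{\tau\mathcal{L}_{X_h}}$ is only guaranteed to converge on $J_0$-holomorphic forms like $dz^1\wedge\cdots\wedge dz^n$, not on arbitrary smooth sections of $K_0$; this is exactly the caveat the statement builds in with the phrase ``applied to appropriate local sections of $K_0$.''
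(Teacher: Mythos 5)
Your proposal is correct and follows essentially the same route as the paper: the paper's proof consists precisely of the observation that $dz^i_\tau = d\,e^{\tau X_h}\cdot z^i = e^{\tau\mathcal{L}_{X_h}}dz^i$ (valid for real $\tau$ and extended by analytic continuation), after which $\Omega_\tau = dz^1_\tau\wedge\cdots\wedge dz^n_\tau$ generates $K_\tau$ because the $z^i_\tau$ are $J_\tau$-holomorphic coordinates. Your additional remarks on nonvanishing via Theorem \ref{lemmalocal1} and on patching over charts merely make explicit what the paper leaves implicit.
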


\begin{proof}
We only need to note that
$$
dz_{\tau}^i= d e^{\tau {X_h}}\cdot z^i =  e^{\tau {\mathcal L}_{X_h}} dz^i,
$$
where the equality is valid for real $\tau$ and follows for complex $\tau$, with small enough $|\tau|$, by 
analytic continuation. 
\end{proof}

\section{Geometric quantization interpretation}
\label{gqsection}

For $\frac{[\omega]}{2\pi}\in H^2(M,\Z)$ integral, let $L\to M$ be a prequantum line 
bundle with connection $\nabla$ such that 
$F_\nabla=-i \omega$. Let $\hat f$ denote the prequantization
of $f \in C^{\rm an}(M)$ acting on sections of $L$,
$$
\hat f = i \nabla_{X_f} + f =  i X_f + f-\theta(X_f),
$$
where, as in $(ii)$ of Theorem \ref{kahlerfamily}, $\theta$ is a local real potential for $-\omega$.
In this case, there is a clearer interpretation of 
the action on the K\"ahler potentials as $e^{i\psi_\tau}$ can be considered as a section of $L$ 
(written in the unitary frame corresponding to $\theta$) such that 
$e^{\kappa_\tau}$ is a section of $L\otimes L^{-1}.$ Then,
\begin{theorem}
Let $\psi_\tau$ be as in (\ref{formula}) in Theorem \ref{kahlerfamily}. Then, $e^{i \psi_\tau}$ is obtained by  
the analytic continuation to complex time of the prequantization of $e^{t X_h}$ acting on $e^{i\psi_0}$, that is 
\be
\label{formula3}
e^{i \psi_\tau} = e^{-i \tau \hat h} \, (e^{i \psi_0}) = e^{i \, (e^{\tau X_h}\psi_0 + \tau h - \alpha_\tau)}.
\ee
\end{theorem}

\begin{proof}The formula is valid for real time $\tau=t\in \R$. To see this, we only need to show that
$$
\frac{d}{dt}e^{i\psi_t} = i\dot\psi_t e^{i\psi_t} = -i \hat h e^{i\psi_t}.
$$
This follows by direct differentiation and by using $\dot\alpha_t = e^{tX_h}\theta(X_h)$ and $X_h \cdot \alpha_t= e^{tX_h}
\theta(X_h)-\theta(X_h)$ where, again, the expression is valid in a local unitary frame of $L$  associated to $\theta$. 
The theorem then follows by analytic continuation in $t$.
\end{proof}

Note that this is the analytic continuation to complex time of the usual formula for the evolution of  
wave functions in geometric quantization, under the Hamiltonian flow of $X_h$. 
(See, for example, Chapters 8 and 9 in \cite{Woodhouse}. For other examples of the validity (in complex time) of the 
same formula, see \cite{KMN1}.) For a related description see also \cite{Do3}.

\begin{remark}
The reason to have $e^{i \psi_0}$ evolve with $e^{-i \tau \hat h}$ rather than with (a lifting to $L$ of) $\varphi_\tau$
is due to the fact that a ${\mathcal P}_0$-polarized section of $L$ evolves under the flow to a 
${\mathcal P}_\tau$-polarized section. Then,
the product of 
a $J_0$--holomorphic section $F_1$ with a $J_0$-antiholomorphic $F_2$
evolves with
$$
F_1^\tau \, F_2^\tau  =  e^{-i\tau \hat h} (F_1) \, e^{i \bar \tau \hat h}(F_2),
$$
thus corresponding the evolution of $e^{-\kappa_0}$ above.
\end{remark}

\section{Symplectic and complex pictures}

Let $(M,\omega,J_0)$ be a compact K\"ahler  manifold.
In \cite{Do1} (see also \cite{Do2,Do3}), Donaldson has considered a formal ``complexification'' of the 
infinite-dimensional group of Hamiltonian
symplectomorphisms of  $(M,\omega)$ and related it with geodesics in the space of  
K\"ahler metrics with fixed cohomology class
and
Mabuchi metric \cite{M}. In the present section, we will recall some aspects of this description and will
explain in which sense it can be extended in an interesting way for geometric quantization. As the section is
mainly illustrative, and intended to give just a general qualitative perspective, 
we will not consider any technicalities 
arising from infinite-dimensions. 

\subsection{Complex picture}

 Let $G$ denote the 
group of Hamiltonian symplectomorphisms of $(M, \omega)$ and $G_\C$ its formal complexification. Following Donaldson \cite{Do1}, in the 
present paper we  are studying two types of ``orbits'' of  $G_\C$. The orbit of first type
for the pair $(\omega, J_0)$ coincides with 
the space of K\"ahler metrics on $(M, J_0)$ with fixed cohomology class
\bas
\hoj &=& \left\{\varphi^* \omega_0, \, \, \varphi \in {\rm Diff}(M), [\varphi^*\omega] = [\omega] , \,  
\hbox{the pair $(\varphi^*\omega, J_0)$ is K\"ahler} \right\}  \cong \\
&\cong & \left\{\eta \in C^{\infty}(M) \ : \ \omega + i \partial_0 \overline{\partial_0} \eta >0 \right\} / \R .
\eas
where $\partial_0$ is the $\partial$-operator relative to the complex structure $J_0$.
Morally
\be
\label{orh}
\hoj = (\gco^* \cdot \omega, J_0)
\ee
so that, for $(\omega, J_0)$, $\gco$ corresponds to the following subset (not subgroup)
of ${\rm Diff}(M)$,
$$
\gco^{(\omega, J_0)} = \left\{\varphi  \in {\rm Diff}(M) \ : \ \varphi^*\omega \in \hoj\right\}
$$
and
$$
\hoj \cong \gco^{(\omega, J_0)} / G
$$
has the structure of an infinite-dimensional symmetric space with constant
negative curvature \cite{Se,Do1} for the Mabuchi metric \cite{M}. 
The transitivity of $\gco$ in (\ref{orh}) is a consequence of Moser's theorem. Choosing a path, $t \mapsto \omega_t, \, t\in [0,1]$, between two symplectic structures in $\hoj$, there are diffeomorphisms $\varPhi_t : M\to M$, obtained by integrating a 
$t$-dependent vector 
field on $M$,  such that $\varPhi_t^*\omega_t=\omega_0$. 
Notice that the family of Moser maps is unique up to composition with a family of symplectomorphisms of $(M,\omega)$.
We will
call the study of  orbits $\hoj$ the ``complex picture'' as 
the complex structure is kept constant. 

The Mabuchi metric on the space of K\"ahler potentials reads
$$
||\delta\phi||^2 = \int_M |\delta \phi|^2 \frac{\omega^n}{n!},
$$
where $\dim M = 2n$ and $\delta\phi\in C^\infty(M)$ is a tangent vector at the metric 
$\gamma_\phi=\omega_\phi(\cdot, J_0 \cdot).$ This 
metric induces a metric on $\hoj$, 
$$\hoj \cong \left\{\eta \in C^{\infty}(M) \ : \ \omega + i \partial_0 \overline{\partial_0} \eta >0, \, \, \int_M \eta \, \omega^n = 0 \right\}.$$

Consider a path 
of K\"ahler potentials $\phi_t$, with $\phi_0=0$, corresponding to the K\"ahler forms
$$
\omega_t = \omega_0 + i \partial_0 \bar\partial_0 \phi_t.
$$
The condition for this path to be a geodesic in the above metric reads
$$
\ddot \phi = \frac12 ||\nabla \dot\phi||^2_\phi,
$$
where $||\nabla \dot \phi||^2_\phi$  is the squared norm of the gradient of $\dot\phi$ with respect to the K\"ahler 
metric determined 
by $\phi$.

\subsection{Symplectic picture}

The second type of ``$\gco$ orbits''
corresponds to acting  on the complex structure
with fixed symplectic structure
\ba
\goj &=& (\omega, G_{\C*} \cdot  J_0)    =  \nonumber \\
&=& \left\{(\omega, \varphi_* J_0), \, \varphi \in \gco^{(\omega, J_0)} \right\} \label{org}
\ea
If $Aut(M, J_0)$ is discrete, than the ``action'' of $\gco$ is free and 
$$
\goj \cong \gco .
$$
In that case we get a natural map
\ba   
\nonumber \pi  :  \goj & \longrightarrow & \hoj \\
(\omega, \varphi_*J_0)  & \mapsto & \varphi^*(\omega, \varphi_*J_0) = (\varphi^* \omega , J_0)   \ ,
\label{proje}
\ea
giving $\goj$ the structure of a principal $G$--bundle over $\hoj$. 
 
If $Aut(M,J_0)$ has continuous subgroups, then 
$$
\goj \cong G_\C / Aut_0(M,J_0),
$$ 
where $Aut_0(M,J_0)$ is the connected component at the identity of  $Aut(M,J_0)$. Instead of (\ref{proje})
one then gets the diagram
\be
\label{diagram10} 
\mbox{
\xymatrix{
&&G_\C    \ar[ld]_{\pi_1} \ar[rd]^{\pi_2}  &  \\
& \goj  && \hoj}}  
\ee
We see that 
a map from 
orbits of imaginary time one-parameter Hamiltonian subgroups
in $\goj$ to $\hoj$ is defined if and only if these orbits do not
correspond to subgroups of $Aut(M,J_0)$.

The study of  $\goj$ is natural in geometric quantization, where it is 
natural to fix a symplectic structure and study the dependence of 
quantization on the choice of complex structure (even for complex structures related
by a symplectomorphism). We call the study of $\goj$ the ``symplectic picture''.

\subsection{Connecting the pictures and analytic Cauchy problem}

Let us, for simplicity, suppose in this section that $Aut(M, J_0)$ is discrete.
To go from the  symplectic picture to the complex picture we have the
projection $\pi$ in (\ref{proje}). To go from the complex picture 
to the symplectic picture we need a section of  (\ref{proje}). 
If $G$ was a compact Lie group,
a natural global section would be given by lifting the action of 
$\exp(i {\rm Lie}(G))$ on $\omega$ to $J_0$. It turns out, 
however, that the Cauchy problem for the geodesics in $\hoj$ does not always 
have solution, which means that there are (non-analytic)
$h \in C^\infty(M)$ for which $e^{it {\cal L}_{X_h}} \omega$,
is not well defined for any $t \neq 0$ \cite{Do1}. On the other hand,
if we restrict the $G$-bundle in (\ref{proje}) to the real-analytic case, 
 \ba   
\nonumber \pi^{\rm an}  :  \goj^{\rm an} & \longrightarrow & \hoj^{\rm an} \\
(\omega, \varphi_*J_0)  & \mapsto & \varphi^*(\omega, \varphi_*J_0) = (\varphi^* \omega , J_0)   \ ,
\label{projean}
\ea
where $\varphi\in {\rm Diff}(M)^{\rm an}\cap G_\C$, then, for each vector in the tangent space at
$(\omega, J_0)$ in $\hoj^{\rm an}$, the Cauchy problem has a solution given by acting on 
$\omega$ with imaginary time analytic
Hamiltonian symplectomorphisms. This is a consequence of Proposition \ref{geod}. More explicitly,
there are sections $\sigma$ of (\ref{projean}) 
defined on neighborhoods of $0$ of every direction in $T_{(\omega,J_0)}\hoj^{\rm an}$ given by
$$
\sigma( \varphi_{is}^* \omega, J_0) = (\omega, (\varphi_{is})_* \ J_0) , \forall h\in C^{\rm an}(M),  
$$
 where $|s|<T_h$ and  $T_h\in \R$ is an $h$-dependent positive real number. 
Notice, however, that the compactness of $M$
is crucial for this result to hold. (See Example \ref{ss32} where we consider a complete real analytic 
Hamiltonian vector field 
on the plane for which the Cauchy problem has no solution.) 

Consider now a path $(\omega_t,J_0)$ in $\hoj$ starting at $(\omega_0,J_0)=(\omega_,J_0)$. 
Fixing a family of Moser maps $\varPhi_t\in {\rm Diff}(M)$, with $\varPhi^*_t\omega_t=\omega$, 
corresponds to fixing a lift $(\omega,J_t)$ of the path to $\goj$, 
with $J_t= \varPhi_t^*J_0$. 
Of course, in this way we obtain equivalent K\"ahler structures 
$$
(M,\omega,J_t,\gamma_t)=\varPhi_t^*(M,\omega_t,J_0,\tilde \gamma_t).
$$

If we denote by $\partial_t$ the $\partial$-operator relative to the complex structure $J_t$ we then have, 
for a (local) K\"ahler potential $k_t$ for $\omega$ with respect to $J_t$,
\begin{equation}\label{moser}
\omega = i\partial_t\bar\partial_t k_t = \varPhi_t^* \omega_t = \varphi_t^* (\omega+i\partial_0\bar\partial_0 \phi_t).
\end{equation}
It follows that 
\begin{equation}\label{phitokappa}
\phi_t = k_t \circ \varPhi_t^{-1} -k_0,
\end{equation}
where $k_0$ is a (local) K\"ahler potential for $\omega$ with respect to $J_0$.

\section{Examples of extension beyond the space of K\"ahler structures}
\label{ss23}

\begin{example}\label{planosymp}Consider again Example \ref{plano} where now $\R^2$ is equipped with its standard symplectic 
structure $\omega=dx\wedge dy$, so that $(\R^2,J_0,\omega)$ is a K\"ahler manifold. 
The diffeomorphisms $\varphi_\tau$ (for $s \neq -s_0$) in this case were generated by $X=y\frac{\partial}{\partial x}=X_h$, 
where $h(x,y)=\frac{y^2}{2}$.

We have the family of K\"ahler polarizations of $(\R^2,\omega)$,
$$
{\mathcal P}_{\tau} = \langle \frac{\partial}{ \partial \bar z_\tau}\rangle, s > -s_0,
$$
where we recall that $\tau_0=r_0+is_0, \tau=r+is, r_0,s_0,r,s\in \R$.
When $s\to -s_0$, we see that ${\mathcal P}_{\tau} \to {\mathcal P}_{{r-is_0}}$ where 
$$
{\mathcal P}_{{r-is_0}}=\left\{-(r_0+r)\frac{\partial}{\partial x}+\frac{\partial}{\partial y}\right\}_\C
$$
is a real polarization.

It is straightforward to obtain the metric $\gamma_\tau$ defined by $(\omega,J_\tau)$,
$$
\gamma_\tau = \left(\frac{1}{s_0+s}\right)dx^2 + 2\left(\frac{r_0+r}{s_0+s}\right) dxdy + 
\left(\frac{(s_0+s)^2+(r_0+r)^2}{s_0+s}\right) dy^2,
$$ 
where $\tau_0=r_0+is_0, \tau=r+is$. Note that as $s\to -s_0$ the map $\varphi_\tau$ in (\ref{matrix}) becomes singular. In this 
limit, after a rescaling by $(s_0+s)$, the metric becomes 
$$
\lim_{s\to -s_0} (s_0+s)\gamma_{\tau} = d(x + (r_0+r) \,y)^2,
$$
so that there is a metric collapse of $\R^2$ to $\R$, with a degeneration along the kernel of the linear map 
$\varphi_{-is_0}$.
\end{example}

\begin{example}
\label{ss32}
Consider, again, the symplectic plane with standard symplectic structure and with initial K\"ahler structure given 
by the standard holomorphic coordinate $z=x+iy$. Consider the Hamiltonian function 
$$
h(x,p)= \frac12 (xy)^2,
$$ 
with Hamiltonian vector field 
$$
X_h =  xy\left(x \frac{\partial}{\partial x} -y \frac{\partial}{\partial y}\right). 
$$

Let $\varphi_{it}$ be the map defined in Theorem \ref{ajax}, where $t\in \R$.

\begin{lemma}\label{x2p2}
$$
z_{it}:= e^{it X_h} z_0 = e^{itxy} x + ie^{-itxy}y.
$$
\end{lemma}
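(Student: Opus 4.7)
The plan is to exploit the fact that $xy$ is a conserved quantity of the Hamiltonian flow, and then sum the Lie series termwise.

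First, I would observe that $X_h(xy) = (x^2y)\, y + x\, (-xy^2) = 0$, so $xy$ is annihilated by $X_h$. Combining this with $X_h(x) = x^2 y = (xy)\, x$ and $X_h(y) = -xy^2 = -(xy)\, y$, an easy induction, using that $e^{\tau X_h}$ acts as an algebra automorphism (Remark \ref{autom}) and hence $X_h$ satisfies the Leibniz rule, gives
\[
X_h^k(x) = (xy)^k \, x, \qquad X_h^k(y) = (-xy)^k \, y, \qquad k \ge 0,
\]
since at each step only the factors of $x$ or $y$ are affected by $X_h$, while $(xy)^k$ is conserved.

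Next, I would pointwise sum the Lie series defining $e^{\tau X_h} \cdot x$ and $e^{\tau X_h} \cdot y$: evaluating at $(x_0, y_0) \in \R^2$,
\[
\sum_{k=0}^\infty \frac{\tau^k X_h^k(x)}{k!}\bigg|_{(x_0,y_0)} = \sum_{k=0}^\infty \frac{\tau^k (x_0 y_0)^k x_0}{k!} = e^{\tau x_0 y_0}\, x_0,
\]
and likewise $e^{\tau X_h} \cdot y = e^{-\tau xy}\, y$. Both series have infinite radius of convergence in $\tau$ at every point, so these are in fact global identities of real analytic functions. Since, by Lemma \ref{grobner}, $e^{\tau X_h}$ is a (local) linear operator on $C^{\mathrm{an}}$, we get
\[
e^{\tau X_h}\cdot z_0 \;=\; e^{\tau X_h}\cdot x + i\, e^{\tau X_h}\cdot y \;=\; e^{\tau xy}\, x + i\, e^{-\tau xy}\, y.
\]
Specialising to $\tau = it$ with $t \in \R$ yields the claimed formula.

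There is really no serious obstacle here: the only computational step is the inductive identity for $X_h^k(x)$ and $X_h^k(y)$, which collapses because $xy$ is a first integral; and convergence of the Lie series is automatic since, at each point, it is nothing but the exponential series multiplied by a constant. The point of the lemma is precisely this elegant closed form, made possible by the conservation of $xy$.
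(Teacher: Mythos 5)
Your proof is correct and is essentially the paper's argument: both proceed by computing the iterated Lie derivatives in closed form (using that $xy$ is a first integral of $X_h$) and summing the Lie series termwise. The only cosmetic difference is that the paper tracks the alternation $X_h^{2k}(z_0)=(xy)^{2k}z_0$, $X_h^{2k+1}(z_0)=(xy)^{2k+1}\bar z_0$ and resums via $\cos$ and $\sin$, whereas you diagonalize first, treating $x$ and $y$ as ``eigenfunctions'' with $X_h^k(x)=(xy)^k x$ and $X_h^k(y)=(-xy)^k y$, which is marginally tidier.
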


\begin{proof} This follows easily from direct computation from 
$X_h^{2k}(z_0)= (xy)^{2k} z_0$ and $X_h^{2k+1}(z_0)= (xy)^{2k+1} \bar z_0$, for $k=0,1,2,\dots.$
\end{proof} 

\begin{remark}As we will see below, in this (non-compact) case, the map $\varphi_{it}$ for $t\neq 0$ never defines a 
global diffeomorphism of the plane. However, in the neighborhood of each point, for $|t|$ small enough, 
$\varphi_{it}$ will be a local diffeomorphism. 
\end{remark}

\begin{lemma}The family of K\"ahler potentials and K\"ahler forms are given by
\begin{equation}\label{kkkk}
\kappa_{it} = \frac12 \cos(2txy) (x^2+y^2)+ tx^2y^2
\end{equation}
and
\begin{equation}\label{kkkkk}
\omega = i g_{it}(x,y) dz_{it} \wedge d\bar z_{it},
\end{equation}
where 
$$
g_{it}^{-1} = 2t\left( x^2+ y^2-2xy \sin (2txy) \right)+ 2\cos (2txy).
$$
\end{lemma}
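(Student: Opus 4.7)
The plan is to apply Theorem \ref{kahlerfamily}(ii) for the K\"ahler potential formula, and to differentiate Lemma \ref{x2p2} directly for the metric coefficient. The crucial structural observation is that $u := xy$ is a first integral of $X_h = x^2 y\,\partial_x - xy^2\,\partial_y$; indeed $X_h(xy) = x^2y\cdot y + x\cdot(-xy^2) = 0$. Consequently the flow equations $\dot x = ux$, $\dot y = -uy$ have constant coefficient, giving $e^{tX_h}x = e^{txy}x$ and $e^{tX_h}y = e^{-txy}y$ for real $t$. By Theorem \ref{grobnerrr} and the automorphism property of Remark \ref{autom}, this extends by analytic continuation to $\tau = it$, so that for the standard K\"ahler potential $\kappa_0 = \tfrac{1}{2}(x^2+y^2)$ of $\omega = dx\wedge dy$,
\[
e^{itX_h}\kappa_0 = \tfrac{1}{2}\bigl(e^{2itxy}x^2 + e^{-2itxy}y^2\bigr).
\]

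For $\alpha_{it}$ I would use the series expression in the Remark after Theorem \ref{kahlerfamily}. The real potential $\theta$ for $-\omega$ is $\theta = \tfrac{1}{2}(y\,dx - x\,dy)$, giving $\theta(X_h) = x^2y^2$. Since $X_h(x^2y^2) = 2xy\,X_h(xy) = 0$, the series collapses to its first term: $\alpha_{it} = it\,x^2y^2$. Substituting into formula (\ref{formula}) produces
\[
\psi_{it} = -\tfrac{i}{4}\bigl(e^{2itxy}x^2 + e^{-2itxy}y^2\bigr) - \tfrac{it}{2}x^2y^2,
\]
and taking $\kappa_{it} = -2\,\mathrm{Im}\,\psi_{it}$ recovers (\ref{kkkk}): the imaginary part of the first summand contributes $-\tfrac{1}{4}\cos(2txy)(x^2+y^2)$, yielding $\tfrac{1}{2}\cos(2txy)(x^2+y^2)$ after the factor of $-2$, while the second summand gives $tx^2y^2$.

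For the second identity, by Theorem \ref{kahlerfamily}(i) the form $\omega$ is of type $(1,1)$ with respect to $J_{it}$, so $\omega = ig_{it}\,dz_{it}\wedge d\bar z_{it}$ for some real function $g_{it}$. Setting $f(x,y) := e^{itxy}x + ie^{-itxy}y = z_{it}$ from Lemma \ref{x2p2} (so $\bar z_{it} = \bar f$ for real $t$), I obtain
\[
dz_{it}\wedge d\bar z_{it} = (f_x\bar f_y - f_y\bar f_x)\,dx\wedge dy = 2i\,\mathrm{Im}(f_x\bar f_y)\,dx\wedge dy,
\]
so comparison with $\omega = dx\wedge dy$ yields $g_{it}^{-1} = -2\,\mathrm{Im}(f_x\bar f_y)$. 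A direct computation gives $f_x = e^{itxy}(1+itxy) + ty^2 e^{-itxy}$ and $\bar f_y = -itx^2 e^{-itxy} + e^{itxy}(txy - i)$; expanding the product and using $e^{itxy}\cdot e^{-itxy} = 1$ together with $e^{\pm 2itxy} = \cos(2txy)\pm i\sin(2txy)$ yields $\mathrm{Im}(f_x\bar f_y) = -t(x^2+y^2) + 2txy\sin(2txy) - \cos(2txy)$, producing (\ref{kkkkk}).

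The main obstacle is the algebraic bookkeeping in the expansion of $f_x\bar f_y$, which has eight cross-terms that must be separated into real and imaginary parts. The saving grace is that the mixed terms involving $e^{itxy}\cdot e^{-itxy}$ collapse into real polynomials in $x, y, t$, while the remaining pieces (proportional to $a^2 = e^{2itxy}$ and $b^2 = e^{-2itxy}$) bundle into trigonometric combinations via $a^2 + b^2 = 2\cos(2txy)$ and $a^2 - b^2 = 2i\sin(2txy)$. Both parts of the lemma rely fundamentally on the conservation of $xy$ under $X_h$, which reduces the otherwise nonlinear flow to an explicit closed form.
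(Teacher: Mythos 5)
Your proof is correct and follows essentially the same route as the paper: the potential comes from formula (\ref{formula}) after observing that $xy$ is a first integral of $X_h$ (so the flow linearizes and $\alpha_{it}$ truncates to $it\,x^2y^2$), and $g_{it}$ comes from expanding $dz_{it}\wedge d\bar z_{it}$ and comparing with $dx\wedge dy$. The only cosmetic difference is your choice $\theta=\tfrac12(y\,dx-x\,dy)$ versus the paper's $\theta=-x\,dy$; these differ by $\tfrac12 d(xy)$, which is annihilated by $X_h$, so both give $\theta(X_h)=x^2y^2$ and the same $\alpha_{it}$.
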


\begin{proof} 
The formula for $k_{it}$ follows from (\ref{formula0}) and (\ref{formula}) by noticing that, with $\theta = -xdy$, one has 
$X_h(\theta(X_h))=0$ so that $\alpha_{it}=2ith$.
The formula for $g_{it}$ follows by computing of $dz_\tau$ and $d\bar z_\tau$ from Lemma \ref{x2p2} and comparing 
with $\omega = dx\wedge dy$.
\end{proof}

We see that for any value of $t$ there is a sufficiently small neighborhood of the origin $x=y=0$ 
where $g_{it}$ is positive and the metric is K\"ahler. On the other hand, for any $t\neq 0$ there are regions where 
$g_{it}$ becomes negative. Let us consider the case $t>0$. (The $t<0$ case is clear.) We then have
$$
g_{it}^{-1}\geq 2t {\rm min} \{(x-y)^2,(x+y)^2\} + 2\cos(2txy).
$$
We see that negative values of $g_{it}$ can arise only in the region
$$
\{(x,y)\in\R^2: |x-y|<\frac{1}{\sqrt{t}}\} \cup \{(x,y)\in\R^2: |x+y|<\frac{1}{\sqrt{t}}\}.
$$
As $t$ increases, these two strips become more and more concentrated around $x=y$ and $x=-y$.
To find regions where $g_{it}$ becomes negative, let us consider $x=y$ and $2txy=\frac{\pi}{2}+2k\pi, k\in \N$.
At those points we have $g_{it}^{-1}=0$, implying that $\varphi_{it}$ is not a global diffeomorphism. 
By Taylor expanding around one of those points, we see that for 
$2txy$ bigger but sufficiently close to $\frac{\pi}{2}+2k_0\pi, k_0\in \N$, the sign of $g_{it}$ is negative. 
Therefore, for any value of $t> 0$, there is a countable set of open regions 
where the polarization ${\mathcal P}_{it}$ is pseudo-K\"ahler. 
This shows that the Cauchy problem for the geodesic
starting at $(\omega,J_0)$ and in the direction of $h$ does not have a solution.  

\begin{remark}
Note that the polarizations ${\mathcal P}_{it}$ on the boundary of the K\"ahler regions become real, since 
$\left(\frac{\partial (z_{it},\bar z_{it})}{\partial (x,y)}\right)$, whose determinant equals $g_{it}^{-1}$, 
has rank one at those points.
\end{remark}
\end{example}

\section{Relation with the Burns-Lupercio-Uribe approach}
\label{s2}

In \cite{BLU}, the authors also study the complexification $\hmc(M,\omega)=G_\C$ 
when $(M,\omega)$ is real analytic.
In this work, the Hamiltonian flow in complex time is described via an actual flow on a complexification 
$(M_\C,\omega_\C)$ of $M$. (For the complexification of real analytic Riemannian manifolds see 
\cite{GS1,GS2,LS,Szoke-91}.) While the formalism of \cite{BLU} is more geometric than the one we consider, 
it requires the introduction of $M_\C$ which may not be convenient when considering some real 
non analytic examples, as in subsection \ref{ssnonanal}.

\subsection{The case of diffeomorphisms}
\label{sscasediffeo}

Let us briefly review the \cite{BLU} construction and show that in the real analytic case and when 
$\varphi_\tau$ is a diffeomorphism it is equivalent to ours. 

Let $(M,\omega,J_0)$ be a K\"ahler manifold with real analytic $(\omega,J_0)$. 
The complexification $M_\C$ is a holomorphic symplectic manifold, with holomorphic symplectic form 
$\omega_\C$ given by the analytic continuation of $\omega$, as described below. On $M_\C$ there exists an anti-holomorphic involution $\sigma:M_\C\to M_\C$, such that 
$M\subset M_\C$ is the fixed point set of $\sigma$ and $\sigma^*\omega_\C=\bar\omega_\C$. 
Moreover, there is a holomorphic projection $\pi_0:M_\C\to M$, such that if $\iota: M\hookrightarrow M_\C$ 
is the inclusion, $\pi_0 \circ \iota = id_M.$ Let us describe this construction locally.

Let $\{x^j,y^j\}_{j=1,\dots,n=\dim M}$ be a local system of real analytic coordinates on a sufficiently 
small open set $U\subset M$. Their analytic 
continuation, $\{x^j_\C,y^j_\C\}_{j=1,\dots,n}$, then form a local system of holomorphic coordinates on 
the complexification $U_\C \subset M_\C$ of $U$. 
Let us assume that the real analytic coordinates are such that $\{(x^j,y^j)=({\rm Re}\,z^j, {\rm Im}\,z^j)\}_{j=1,\dots,n}$ 
where $\{z^j\}_{j=1,\dots,n}$ is a 
local system of 
$J_0$-holomorphic coordinates on $U$, 
so that $\omega$ is given by the type-$(1,1)$ form
$$
\omega = \sum_{j,k=1}^n i\,g_{j\bar k}(z,\bar z) dz^j \wedge d\bar z^k,
$$
where the functions $g_{j\bar k}= \overline{g_{k\bar j}}, j,k=1,\dots,n$, are real 
analytic and $z=(z^1,\dots,z^n)$. Then, one has holomorphic coordinates $(z_\C,w_\C)$ on $U_\C$ which are the 
analytic continuation of $(z,\bar z)$ respectively. Concretely, one has $z_\C^j=x_\C^j+iy_\C^j, 
w_\C^j=x_\C^j-iy_\C^j, j=1,\dots,n$. The holomorphic symplectic form on $U_\C$ reads
$$
\omega_\C = \sum_{j,k=1}^ni\, g^\C_{j k} (z_\C,w_\C) dz^j_\C \wedge dw^k_\C,
$$
where the holomorphic funtion
$g^\C_{j k}$ is the analytic continuation of $g_{j \bar k}$.

The anti-holomorphic involution 
$\sigma:U_\C\to U_\C$ is given by $\sigma (z_\C,w_\C)= (\bar w_\C, \bar z_\C)$ and the holomorphic projection 
$\pi_0:U_\C\to U$ is given by $\pi_0(z_\C,w_\C)= z_\C$. Note that both $\sigma$ and $\pi_0$ are 
well defined globally, since if
one has a holomorphic change of local coordinates on $M$, $\tilde z=F(z)$, $F$ holomorphic, then
also one has $\tilde z_\C = F (z_\C)$ and $\tilde w_\C = \bar F (w_\C)$.
The subset $U\subset U_\C$ is described by the equations 
$$
w_\C=\bar z_\C.
$$

Let now $h\in C^{\rm an}(M)$ be a real Hamiltonian function, let $\tau\in \C$ and consider $\tilde h= \tau h$.
\footnote{While in the present work we consider the complex time Hamiltonian flow of 
real Hamiltonian functions $h$, in \cite{BLU} the authors prefer to take more general 
complex valued Hamiltonians $\tilde h$
and to consider their flows in real time. Our formalism can be easily adapted to the case of general complex 
valued real analytic $h$. In particular, as it will be seen below in Section \ref{sectiong}, 
to describe geodesics on $\hoj^{\rm an}$ it suffices to consider 
the imaginary time flow real Hamiltonians or, equivalently, real time flow of (pure) imaginary Hamiltonians.}
Let $\tilde h_\C$ be its analytic continuation to $U_\C$ 
(if necessary one takes a smaller neighborhood), so that 
$$
\tilde h_\C(z_\C,w_\C)_{|_{\bar z_\C=w_\C}} = \tau h(z,\bar z), \, z=z_\C. 
$$
The Hamiltonian vector field of ${\rm Re}\,\tilde h_\C$ with respect to the real symplectic form 
$\frac12 (\omega_\C+\bar\omega_\C)$, is given by 
$$
X_{{\rm Re}\,\tilde h_\C}^{\frac12(\omega_\C+\bar\omega_\C)} = X^{\omega_\C}_{\tilde h_\C} +X^{\bar\omega_\C}_{{\overline{\tilde h}_\C}} 
= \tau X^{\omega_\C}_{h_\C} + \bar \tau X^{\bar \omega_\C}_{\bar h_\C}.
$$  
Let $I\subset \R$ be a sufficiently small open neighborhood of zero, and let $\eta_t, t\in I$ be the time-$t$ 
flow of the Hamiltonian vector field $-X_{{\rm Re}\,\tilde h_\C}^{\frac12(\omega_\C+\bar\omega_\C)}$.
Let ${\mathcal F}_0$ be the foliation of $U_\C$ defined by the fibers of the projection $\pi_0$. Denote by 
${\mathcal F}_t$ its push-forward by $\eta_t$, and let $\pi_t$, for $|t|$ small enough, be the corresponding 
projection to $U$. In \cite{BLU}, the authors consider the map (see their equation $(1.1)$)
\begin{equation}\label{blumap}
\pi_t\circ \eta _t \circ \iota : U\to M.
\end{equation}

If $M$ is compact as we are assuming, as shown in \cite{BLU}, this globalizes to a well defined 
diffeomorphism of $M$, for small enough $|t|$. 

We then have,
\begin{proposition}\label{blue} Under the conditions of Theorems \ref{ajax} and \ref{kahlerfamily}, 
the diffeomorphisms of $M$ defined in 
(\ref{mapa}) and (\ref{blumap}) are related by
$$
\varphi_{t\tau}^{-1} = \pi_t\circ \eta_t,
$$
or, equivalently, the following diagram is commutative

\be
\label{diagram} 
 \mbox{
\xymatrix{
&&M_\C    \ar[rr]^{\eta_t}&& M_\C \ar[d]^{\pi_t} \\
&&(M,J_0) \ar @{^{(}->}[u]^{\iota}   && (M,J_{t\tau}) \ar[ll]^{\varphi_{t\tau}} .}}  
\ee

\end{proposition}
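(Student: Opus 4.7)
I would test equality of the two maps $M\to M$ on the local $J_{t\tau}$-holomorphic coordinates $z^i_{t\tau}$ of Theorem \ref{lemmalocal1}. Theorem \ref{ajax} characterises $\varphi_{t\tau}$ by $\varphi_{t\tau}^{\ast} z^i = z^i_{t\tau}$, or equivalently $z^i_{t\tau}\circ\varphi_{t\tau}^{-1} = z^i$, so it is enough to verify
$$
z^i_{t\tau}\bigl(\pi_t(\eta_t(\iota(p)))\bigr) = z^i(p), \qquad p\in M,
$$
for each local $J_0$-holomorphic coordinate $z^i$.

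The first step is to decompose $\eta_t$. Its generator $V_\tau = -\tau X_{h_\C}^{\omega_\C} - \bar\tau X_{\bar h_\C}^{\bar\omega_\C}$ is the sum of a $(1,0)$-type complex vector field with holomorphic coefficients and its complex conjugate. A short coordinate computation shows $[X_{h_\C}^{\omega_\C}, X_{\bar h_\C}^{\bar\omega_\C}] = 0$, since the holomorphic coefficients of the first are annihilated by the $\partial_{\bar z_\C}, \partial_{\bar w_\C}$ appearing in the second, and conversely. Letting $\Psi^{\mathrm{hol}}_s$ be the holomorphic time-$s$ flow of $X_{h_\C}^{\omega_\C}$ (defined for $s$ in a complex disc) and $\bar\Psi^{\mathrm{ahol}}_{\bar s}$ its conjugate, the real flow factors as $\eta_t = \Psi^{\mathrm{hol}}_{-t\tau} \circ \bar\Psi^{\mathrm{ahol}}_{-t\bar\tau}$. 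The antiholomorphic factor preserves every $J_0$-holomorphic function on $M_\C$, since its generator kills such functions, so for every holomorphic coordinate $z^i_\C$ on $M_\C$,
$$
z^i_\C \circ \eta_t \;=\; z^i_\C \circ \Psi^{\mathrm{hol}}_{-t\tau} \;=\; e^{-t\tau X_{h_\C}^{\omega_\C}}\cdot z^i_\C.
$$
A direct chain-rule calculation in the parametrisation $(z,\bar z)\mapsto (z_\C=z,\, w_\C=\bar z)$ of $M\subset M_\C$ yields $(X_{h_\C}^{\omega_\C} F)|_M = X_h(F|_M)$ for every holomorphic $F$ on $M_\C$; iterating gives $(X_{h_\C}^{\omega_\C})^n z^i_\C|_M = X_h^n z^i$. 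Restricting the displayed Lie series to $M$ therefore produces $z^i_\C(\eta_t(\iota(p))) = z^i_{-t\tau}(p)$ and, by $t\mapsto -t$, $z^i_\C(\eta_t^{-1}(\iota(p))) = z^i_{t\tau}(p)$.

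The final step identifies $\pi_t^{\ast} z^i_{t\tau}$ on $M_\C$ with $z^i_\C \circ \eta_t^{-1}$. The fibres of $\pi_t$ are the leaves of $\mathcal{F}_t = \eta_t(\mathcal{F}_0)$, and since $z^i_\C$ is constant on the leaves $\{z_\C = \mathrm{const}\}$ of $\mathcal{F}_0$, the function $z^i_\C\circ\eta_t^{-1}$ is constant on leaves of $\mathcal{F}_t$ and so descends through $\pi_t$. Its restriction to $M$ equals $z^i_{t\tau}$ by the previous paragraph, and for $|t|$ sufficiently small $\pi_t\circ\iota = \mathrm{id}_M$ by transversality together with $\pi_0\circ\iota = \mathrm{id}_M$; uniqueness of the descent then forces $z^i_{t\tau}\circ\pi_t = z^i_\C\circ\eta_t^{-1}$ on $M_\C$. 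Evaluating at $\eta_t(\iota(p))$ collapses this to $z^i_{t\tau}(\pi_t(\eta_t(\iota(p)))) = z^i_\C(\iota(p)) = z^i(p)$, proving the identity. The main obstacle is precisely this identification of the pull-back: it ties together the commuting decomposition of $\eta_t$, the compatibility of the Lie series on $M$ and on $M_\C$, and the transversality of $M$ to $\mathcal{F}_t$ for small $|t|$, and all three ingredients must be in place simultaneously.
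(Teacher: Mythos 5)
Your proof is correct and follows essentially the same route as the paper's: both identify the leaves of $\mathcal{F}_t$ through the pulled-back functions $z_\C\circ\eta_t^{-1}=e^{t\tau X_{h_\C}}\cdot z_\C$ and then intersect with (respectively restrict to) $M$ to recover $z_{t\tau}$ and hence $\varphi_{t\tau}^{-1}$. The only difference is that you make explicit two facts the paper uses silently, namely the commuting holomorphic/antiholomorphic factorisation of $\eta_t$ and the restriction identity $(X^{\omega_\C}_{h_\C}F)|_M=X_h(F|_M)$ for holomorphic $F$.
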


\begin{proof}
Consider local coordinates $(z_\C,w_\C)$ on $M_\C$ as above, and assume we are under the conditions 
and notations of Theorems \ref{ajax} and \ref{kahlerfamily}. Let $(z,\bar z)\in M$. Then,
$$
{{\mathcal F}_t}_{|_{\eta_t(z,\bar z)}} = \eta_t\left({{\mathcal F}_0}_{|_{(z,\bar z)}} \right).
$$
The leaf ${{\mathcal F}_0}_{|_{(z,\bar z)}}$ is determined by the equations $z_\C=z$, so that the leaf 
${{\mathcal F}_t}_{|_{\eta_t(z,\bar z)}}$ is determined by the equations
$$
\left((z_\C-z) \circ \eta_{-t}\right)(z_\C,w_\C)=0 \Leftrightarrow \left(e^{t\tau X_{h_\C}}\cdot z_\C\right)(z_\C,w_\C)=z,
$$
whose set of solutions intersects $M$ at the point $\varphi_{t\tau}^{-1}(z,\bar z)$, as we wanted.
\end{proof}

\begin{remark}Note that $\eta_t$ is the flow of $-X_{{\rm Re}\,\tilde h_\C}^{\frac12(\omega_\C+\bar\omega_\C)}$ 
due to a relative minus sign between our conventions and the conventions of \cite{BLU}. 
\end{remark}

\subsection{Real and mixed polarizations}
\label{ssnonholproj}

Let us consider a generalization of the geometric setting of \cite{BLU} motivated by our approach.
Assume that for some $t_0 \neq  0$, the structure $J_{t_0}$ of Theorem 1.2 of \cite{BLU} degenerates,
and the corresponding polarization of $(M,\omega)$ becomes mixed. To study this situation, let us consider diagram (\ref{diagram})
with complex structures replaced by polarizations (which are equivalent to complex structures in the K\"ahler case),
\be\label{diagram2}
\xymatrix{
&&M_\C    \ar[rr]^{\eta_t}&& M_\C \ar[d]^{\pi_t} \\
&&(M,{\mathcal P}_0) \ar @{^{(}->}[u]^{\iota}   && (M,{\mathcal P}_{t\tau}) \ar[ll]^{\varphi_{t\tau}} .}
\ee
Note that, in the case when $\varphi_{t\tau}$ is a diffeomorphism, one has 
$\varphi_{{t\tau}*}{\mathcal P}_{t\tau} = {\mathcal P}_0$. The polarization ${\mathcal P}_0$ can be described locally by the 
${\mathcal P}_0$-polarized ($ie \, J_0$-holomorphic) functions
$\{z^j_\alpha\}_{j=1,\dots,n}$. Then, ${\mathcal P}_{t\tau}$ will be described 
locally by the functions
\be\label{polari}
\{\varphi_{t\tau}^*z^j_\alpha\}_{j=1,\dots,n}. 
\ee
In the case when $\varphi_{t\tau}$ is well defined but no longer a diffeomorphism, (\ref{polari}) still defines a 
polarization on $M$. 
To illustrate this situation, let us consider an example. 
\begin{example} Consider again example \ref{planosymp}. As seen in example \ref{plano}, if $\tau=-is_0$
the linear map $\varphi_{-is_0}$ is not invertible, mapping $\R^2$ to the dimension one subspace $\{y=0\}$. 
The polarization ${{\mathcal P}_0}$ is  defined by the polarized ($ie$ $J_0$-holomorphic) function 
${z_0=x+\tau_0 y}$. The polarization
${\mathcal P}_{-is_0}$ is defined by the polarized function  $z_0\circ \varphi_{-i s_0}= x + r_0 y$, as in example \ref{planosymp}.

Note also that on the complexification $M_\C=\C^2$, for some $\tau\in \C$, the leaves of the foliation 
${\mathcal F}_t$ are given by the conditions $(z_{t\tau})_\C=x_\C+(\tau_0+t\tau)y_\C=const.$ When $t\tau=-is_0$ (or more generally 
${\rm Im}(t\tau) = -s_0$), these leaves either  do not intersect $M=\R^2 \subset \C^2$ or are contained in $\R^2$ so that the projection $\pi_t$ is not defined. 
\end{example}

We see that, for a mixed polarization ${\mathcal P}_{t\tau}$ in (\ref{diagram2}) the projection
$\pi_t$ is in general not defined. In fact, it will always be the case that if  ${\mathcal P}_{t\tau}$ 
is real then the leaves of the associated foliation of $M_\C$ (defined by the level sets of the analytic continuation of 
the ${\mathcal P}_{t\tau}$-polarized functions) either do not intersect $M$ or are contained in $M$. 
On the other hand, since the projection $\pi_0$ is 
well defined, by assumption, by inverting the vertical and top horizontal arrows in diagram (\ref{diagram2})
Proposition \ref{blue} can be extended in the following sense. 
\begin{proposition}
Assume that $\eta_t$ is defined for some $t\in \R$. The composition $\pi_0 \circ \eta_{-t} \circ \iota$ defines the map 
$\varphi_{t\tau}:M\to M$, as in the diagram 
\be\label{diagram3}
\xymatrix{
&&M_\C   \ar[d]_{\pi_0} && M_\C \ar[ll]_{\eta_{-t}} \\
&&(M,{\mathcal P}_0)   && (M,{\mathcal P}_{t\tau}) \ar[ll]^{\varphi_{t\tau}} \ar @{^{(}->}[u]_{\iota} .}
\ee
Let $\{(U_\alpha,z_\alpha)\}$ be a set of local holomorphic charts for $(M,J_0)$, covering $\pi_0\circ \eta_{-t}\circ \iota (M)$. 
Then, 
the local functions $\{\varphi_{t\tau}^*(z_{\alpha}^j)\}_{j=1,\dots,n}$ define a polarization ${\mathcal P}_{t\tau}$ on $M$.
\end{proposition}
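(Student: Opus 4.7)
The plan is to proceed in two stages, paralleling the proof of Proposition \ref{blue}.

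First, I would verify the identity $\pi_0 \circ \eta_{-t} \circ \iota = \varphi_{t\tau}$ by a direct coordinate computation. Let $p = (z,\bar z) \in M$ with local $J_0$-holomorphic coordinates as described above. By definition $\eta_t$ is the flow of $-X_{\mathrm{Re}\,\tilde h_\C}^{\frac12(\omega_\C+\bar\omega_\C)} = -\tau X^{\omega_\C}_{h_\C} - \bar\tau X^{\bar\omega_\C}_{\bar h_\C}$, and only the first summand acts on the holomorphic coordinate $z_\C$. Hence, on $M_\C$,
$$
z_\C \circ \eta_{-t} = e^{t\tau X_{h_\C}} \cdot z_\C,
$$
as an absolutely convergent Lie series (Lemma \ref{grobner} and Theorem \ref{grobnerrr} extend to $M_\C$). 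Restricting to $M$ via $\iota$ and using $z_\C|_M = z$ and $X_{h_\C}|_M = X_h$, one gets $(z_\C \circ \eta_{-t} \circ \iota)(p) = (e^{t\tau X_h}\cdot z)(p) = z_{t\tau}(p)$. Since $\pi_0(z_\C,w_\C)$ is the point of $M$ whose $z$-coordinate equals $z_\C$, and since by Theorem \ref{ajax} the biholomorphism $\varphi_{t\tau}$ is uniquely characterized by $z^j\circ\varphi_{t\tau}=z^j_{t\tau}$, the two maps agree. This is essentially the computation in the proof of Proposition \ref{blue}, run with the original foliation $\mathcal F_0$ (projection $\pi_0$) and the reversed flow $\eta_{-t}$.

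Second, for the polarization claim, I would show that the local functions $\varphi_{t\tau}^*(z_\alpha^j) = z_{\alpha,t\tau}^j$ satisfy the two defining properties of a (complex) Lagrangian involutive distribution. From the argument in the proof of Theorem \ref{kahlerfamily}(i), $\mathcal L_{X_h}\omega = 0$ together with $\{z_\alpha^i, z_\alpha^j\}=0$ yields
$$
\{z_{\alpha,t\tau}^i, z_{\alpha,t\tau}^j\} = 0,
$$
so the joint kernel $\mathcal P_{t\tau} := \bigcap_j \ker d(z_{\alpha,t\tau}^j) \subset TM\otimes\C$ is isotropic and involutive. The lemma preceding Example \ref{plano} guarantees that the holomorphic $n$-form $dz^1_{\alpha,t\tau}\wedge\cdots\wedge dz^n_{\alpha,t\tau}$ never vanishes on $U_\alpha$ (since $e^{-t\tau X_h}$ inverts $e^{t\tau X_h}$ on $J_0$-holomorphic functions), so the $n$ complex differentials $d(z_{\alpha,t\tau}^j)$ are $\C$-linearly independent at each point. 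Therefore $\mathcal P_{t\tau}$ has complex rank $n$ and is Lagrangian. On overlaps, the $z_\alpha^j$ and $z_\beta^k$ are related by $J_0$-holomorphic transition functions, and applying $\varphi_{t\tau}^*$ preserves these relations, so the local polarizations glue to a global polarization on $\pi_0\circ\eta_{-t}\circ\iota(M)$.

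The main obstacle is controlling the degenerate regime, in which $\varphi_{t\tau}$ fails to be a local diffeomorphism of $M$ and $\mathcal P_{t\tau}$ becomes mixed or real. In the non-degenerate case the result reduces to the description of Section 4 together with Proposition \ref{blue}. When $\varphi_{t\tau}$ loses invertibility, the evolved foliation $\mathcal F_t$ on $M_\C$ either fails to meet $M$ transversally or has leaves entirely contained in $M$, which is precisely why $\pi_t$ is not defined and why one must replace diagram (\ref{diagram2}) by (\ref{diagram3}). The key observation that rescues the statement is that the obstruction to $\{z_{\alpha,t\tau}^j\}$ forming local coordinates on $M$ lies in the \emph{anti-holomorphic} part $d\bar z_{\alpha,t\tau}^j$ (cf.\ the discussion before Example \ref{plano}), whereas only $\C$-linear independence of the $n$ holomorphic differentials $dz_{\alpha,t\tau}^j$ is needed for them to cut out a polarization. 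That independence persists for all $t$ for which $\eta_t$ is defined, and is guaranteed by the invertibility of the Lie series $e^{\pm t\tau X_h}$ on $J_0$-holomorphic functions.
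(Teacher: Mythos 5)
Your first stage is essentially the paper's own argument: the identity $\varphi_{t\tau}=\pi_0\circ\eta_{-t}\circ\iota$ is obtained exactly as in Proposition \ref{blue}, by computing $z_\C\circ\eta_{-t}=e^{t\tau X_{h_\C}}\cdot z_\C$ and restricting to $M$. The divergence, and the gap, is in your second stage. You establish isotropy and nondegeneracy by working downstairs on $M$ with Lie series: the Poisson-bracket computation borrowed from Theorem \ref{kahlerfamily}(i) and the unlabelled lemma before Example \ref{plano} asserting that $dz^1_{\alpha,\tau}\wedge\cdots\wedge dz^n_{\alpha,\tau}$ has no zeros. Both of these presuppose that the series $e^{t\tau X_h}\cdot z^j_\alpha$, and its inverse $e^{-t\tau X_h}$, converge on $M$ at the parameter value $t\tau$ in question --- that is exactly the hypothesis ``the functions $z^j_{\alpha,\tau}$ are well defined'' in that lemma, where ``well defined'' means absolutely convergent in the sense fixed after Lemma \ref{grobner}. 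But the proposition assumes only that the flow $\eta_t$ exists on $M_\C$. These hypotheses are not equivalent: $t\mapsto\eta_{-t}^*\,(z_\alpha^j)_\C$ is real analytic wherever the flow exists, yet its Taylor series at $t=0$, which is the Lie series, may have radius of convergence strictly smaller than the existence time of the flow. So precisely in the degenerate regime the proposition is designed for, your appeal to ``the invertibility of the Lie series $e^{\pm t\tau X_h}$'' is not available.

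The paper closes this by doing all the work upstairs on $M_\C$ and restricting only at the end: the holomorphic functions $(z_\alpha^j)_\C$ are functionally independent and cut out the Lagrangian foliation ${\mathcal F}_0$ with respect to $\omega_\C$; since $\eta_{-t}$ is, by hypothesis, an honest diffeomorphism preserving $\omega_\C$, their pullbacks remain functionally independent and still define a Lagrangian distribution; and because $M$ is a totally real Lagrangian submanifold of maximal dimension with $\iota^*\omega_\C=\omega$, restriction to $M$ preserves both the $\C$-linear independence of the $n$ differentials and the isotropy. Your gluing argument, and your key observation that only the independence of the $n$ ``holomorphic'' differentials is needed (not of the full set including their conjugates), are correct and agree with the paper; to repair the proof, derive that independence and the isotropy from the diffeomorphism $\eta_{-t}$ on $M_\C$ rather than from Lie series on $M$. (Also, the resulting polarization lives on $M$, not on the image $\pi_0\circ\eta_{-t}\circ\iota(M)$ as you wrote; the charts $U_\alpha$ need only cover that image.)
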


\begin{proof}
The functions $\{\varphi_{t\tau}^*(z_{\alpha}^j)\}_{j=1,\dots,n}$ are defined locally on $M$ and for each $p\in M$ there is at least one index $\alpha$ for which the functions $\varphi_{t\tau}^*(z_{\alpha}^j), j=1,\dots,n$ are well defined in a 
open neighborhood of $p$. The analytic continuation of $z_\alpha^j$ from $U_\alpha$ to $\pi_0^{-1}(U_\alpha)$ is $(z_\alpha^j)_\C$.
The local funtions $\{(z_\alpha^j)_\C\}_{j=1,\dots,n}$ on $M_\C$ are functionally independent and define a local Lagrangian (with respect to $\omega_\C$)
foliation of $M_\C$, whose leaves coincide with the leaves of ${\mathcal F}_0$. Under pull-back by 
the Hamiltonian flow $\eta_{-t}$, which by assumption is well defined and is therefore a diffeomorphism, these functions 
remain functionally independent and continue to define a local Lagrangian distribution. Since $M$ is a totally 
real Lagrangian submanifold of $M_\C$, the restriction to $M$ by $\iota$ still gives a set of functionally independent functions 
defining, locally, a polarization on $M$. The fact that these locally defined polarizations glue consistently to give a 
globally defined polarization ${\mathcal P}_{t\tau}$ follows from the fact that if, locally, 
$(z_\alpha)_\C=\phi_{\alpha\beta}((z_\beta)_\C$, with $\phi_{\alpha\beta}$ a local biholomorphism, then $\eta_{-t}^*(dz_\alpha^j)_\C$ is a linear combination of 
$\{\eta_{-t}^*(dz_\beta^j)_\C\}_{j=1,\dots,n}$.
\end{proof}

\subsection{Non real analytic cases}
\label{ssnonanal}

The approach described in Theorems \ref{ajax} and \ref{kahlerfamily} can still be useful in situations where the Hamiltonian 
$h$ is not real analytic.
In this case, one cannot use a flow on $M_\C$ to describe the evolution of 
K\"ahler structures on $M$ but the local complex time Hamiltonian action on local coordinates can sometimes still be considered. 

The prototypal non real analytic example is given by $(M,\omega,J_0)=(\R^{2n},\sum_{j=1}^n dx^j\wedge dy^j, J_0)$, where $J_0$ 
is the standard complex structure on $\R^{2n}\cong \C^n$, when one considers a smooth (non real analytic) function 
Hamiltonian $h$ depending only on the variables $(y^1,\dots,y^n)$. One has,
$$
X_h = \sum_{j=1}^n \frac{\partial h}{\partial y^j} \frac{\partial}{\partial x^j}.
$$ 
The flow of $X_h$ is given by $\gamma_t(x,y)= (x+t \frac{\partial h}{\partial y}, y)$, where $\frac{\partial h}{\partial y}=
(\frac{\partial h}{\partial y^1}, \dots, \frac{\partial h}{\partial y^j})$.
The Lie series corresponding to the global $J_0$--holomorphic coordinates, $z=x+iy$, has only the 
first two terms different from zero
and thus defines,  for any $\tau = r+is \in \C, r,s\in\R$, new global functions defining 
the new polarization, ${\cal P}_\tau$,
$$
z_\tau = e^{\tau X_h} (z) = x + r \frac{\partial h}{\partial y} + i  (y + s  \frac{\partial h}{\partial y}).
$$
The map $\varphi_\tau \, : \,  (\R^{2n}, {\cal P}_\tau)  \rightarrow (\R^{2n}, {\cal P}_0) $
{\em ie} such that
$$
z_\tau = (\varphi_\tau)^* z,
$$
is, gobally in $\R^{2n} \times \C$, given by the following map, which is analytic in $x$ but not in $y$,
$$
\varphi_\tau(x, y) =(x+r  \frac{\partial h}{\partial y}, y + s  \frac{\partial h}{\partial y}).
$$ 
If $h$ has a nonnegative Hessian then we see that  $\varphi_\tau$ are diffeomorphisms
for $s \geq 0$. 
Therefore, in this situation, Theorem \ref{ajax} is globally valid.

A more elaborate example where one can treat non real analytic Hamiltonian flows is given by the cotangent bundle of a 
compact Lie group, $T^*K$, where $h$ is a smooth $K$-bi-invariant function. This case was treated in \cite{KMN1} and it is 
further described in Section \ref{ss33}.

\subsection{Completing flows}
\label{sscompleting}

The geometric setting of \cite{BLU} makes it possible, in some cases,
to extend our map $\varphi_\tau$ in Theorem \ref{ajax}.
Important examples are given by real algebraic completely 
integrable systems (aci) \cite{Va}, $(M, \omega, \mu)$, where $\mu : M \rightarrow \R^n$,
is the moment map of a $\R^n$ action such that the 
complexification $(M_\C, \omega_\C, \mu_\C)$ is a nonsingular affine 
variety and the regular level sets of $\mu_\C$ are affine parts 
of abelian varieties where the flows of $X_{\mu^j_\C}$ linearize. Therefore,
there is an extension  $(\widehat M, \widehat \omega, \widehat \mu)$ of the original system,
with $M$ dense in $\widehat M$,  
in which  $\varphi_\tau$ can be geometrically extended to $\tau \in \C$.

\section{Geodesics on the space of K\"ahler metrics}
\label{sectiong}

Let us consider the families of complexified Hamiltonian flows of 
the previous section. As remarked in \cite{Do1}, geodesics 
in the space of K\"ahler metrics, which is viewed as a symmetric space for the group of 
symplectomorphisms, can be thought of 
as analytic continuations of Hamiltonian flows which are the one-parameter subgroups of the group of symplectomorphisms. 
This can be made explicit as follows.

\begin{proposition}\label{geod} Under the conditions of Theorem 
\ref{kahlerfamily}, let $\tau = it, t\in\R$. Then, the path of K\"ahler metrics $\gamma_\tau$, in $(i)$ in the 
Theorem, is a geodesic path.
\end{proposition}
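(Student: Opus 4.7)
The plan is to translate the symplectic-picture family $(M,\omega,J_{it})$ into the complex picture via the Moser-type identification $\Phi_t = \varphi_{it}$, and then directly verify the Mabuchi geodesic equation $\ddot\phi_t = \tfrac12\|\nabla\dot\phi_t\|_{\omega_t}^2$ for the resulting curve of K\"ahler potentials. By Theorem \ref{kahlerfamily}, $\varphi_{it}:(M,J_{it})\to(M,J_0)$ is a biholomorphism, so $\omega_t := (\varphi_{it}^{-1})^*\omega$ is K\"ahler on $(M,J_0)$ with $\varphi_{it}^*\omega_t=\omega$; hence $\Phi_t=\varphi_{it}$ is a Moser map for the path $\omega_t$ and, by (\ref{phitokappa}), the complex-picture K\"ahler potential is $\phi_t = \kappa_{it}\circ\varphi_{it}^{-1}-\kappa_0$.

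The first main step is to identify the velocity field $V_t := \dot\varphi_{it}\circ\varphi_{it}^{-1}$ on $(M,\omega_t,J_0)$. Differentiating $\varphi_{it}^*z^j = e^{itX_h}\cdot z^j$ in $t$ gives $V_t(z^j) = (iX_h\cdot\varphi_{it}^*z^j)\circ\varphi_{it}^{-1}$. Since $\varphi_{it}^*z^j$ is $J_{it}$-holomorphic, $iX_h$ acts on it as $J_{it}X_h$, which is the K\"ahler gradient $\nabla^{\gamma_{it}}h$; transferring back via $\varphi_{it}$ one obtains $V_t = \nabla^{\omega_t}(h\circ\varphi_{it}^{-1})$, with initial value $V_0 = J_0 X_h = \nabla^{\gamma_0}h$.

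The second step is the verification proper. Differentiating $\omega_t = (\varphi_{it}^{-1})^*\omega$ at $t=0$ yields $\dot\omega_0 = -d\,\iota_{V_0}\omega = -2i\partial_0\bar\partial_0 h$, so $\dot\phi_0 = -2h$ modulo constants, and the same computation at time $t$ gives $\dot\phi_t = -2h\circ\varphi_{it}^{-1}$, with consistency across $t$ provided by Proposition \ref{realnada}. Differentiating once more via the chain-rule identity $\tfrac{d}{dt}(f\circ\varphi_{it}^{-1}) = -V_t(f\circ\varphi_{it}^{-1})$ (a direct consequence of $\varphi_{it}\circ\varphi_{it}^{-1}=\mathrm{id}$), together with $V_t = -\tfrac12\nabla^{\omega_t}\dot\phi_t$, produces $\ddot\phi_t = 2V_t(h\circ\varphi_{it}^{-1}) = 2\|V_t\|^2_{\omega_t} = \tfrac12\|\nabla\dot\phi_t\|^2_{\omega_t}$, which is precisely the Mabuchi geodesic equation.

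The main obstacle is the identification of $V_t$ as a gradient at arbitrary time $t$. Since $\varphi_{it}$ is \emph{not} a flow (see the remark after Theorem \ref{ajax}), this cannot be deduced from the initial-time formula $V_0 = \nabla^{\gamma_0}h$ by transport along a one-parameter group; it must be established at each $t$ using the Gr\"obner identity $\varphi_{it}^*z^j = e^{itX_h}\cdot z^j$ together with the symplectic-picture fact that these functions are $J_{it}$-holomorphic coordinates. Once that identity is in hand, all signs and the factor $\tfrac12$ track through unambiguously.
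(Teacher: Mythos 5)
Your argument is correct and lands on the same two key identities as the paper --- $\dot \phi_t = -2h\circ \varPhi_t^{-1}$ and then $\ddot\phi_t = 2\|X_h\|^2_{\gamma_\tau} = \tfrac12\|\nabla\dot\phi_t\|^2$ --- but it reaches the first one by a genuinely different route. The paper differentiates the explicit potential formula (\ref{formula0})--(\ref{formula}) from Theorem \ref{kahlerfamily}(ii) in $t$, juggling $\dot\alpha_\tau$, the splitting $d\psi_\tau = \partial_\tau\psi_\tau + \bar\partial_\tau\psi_\tau$ and $\theta^{(0,1)_\tau}=\bar\partial_\tau\psi_\tau$; this produces the exact potential $\phi_t=\kappa_\tau\circ\varPhi_t^{-1}-\kappa_0$ with no additive ambiguity. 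You instead never touch the potential formula: you identify the velocity field $V_t=\dot\varphi_{it}\circ\varphi_{it}^{-1}$ of the (non-flow) family as $(\varphi_{it})_*(J_{it}X_h)=\nabla^{\tilde\gamma_t}(h\circ\varphi_{it}^{-1})$, using that $iX_h$ acts as $J_{it}X_h$ on the $J_{it}$-holomorphic functions $e^{itX_h}\cdot z^j$, and then get $\dot\omega_t=-d\,\iota_{V_t}\omega_t=-2i\partial_0\bar\partial_0(h\circ\varphi_{it}^{-1})$ by Cartan's formula. This is cleaner and more geometric, and it isolates exactly the point you flag: the gradient identification must be redone at each $t$ because $\varphi_{i(t+s)}\neq\varphi_{it}\circ\varphi_{is}$. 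What it buys less of is the normalization: passing from $\dot\omega_t$ to $\dot\phi_t$ only pins $\dot\phi_t$ down modulo a $t$-dependent constant, and the geodesic equation is not invariant under $\phi_t\mapsto\phi_t+c(t)$; your appeal to Proposition \ref{realnada} does not really settle this, since that proposition concerns real-time shifts $\kappa_{\tau+s}=\varphi_s^*\kappa_\tau$. The cleanest fix within your approach is simply to \emph{define} $\phi_t=-2\int_0^t h\circ\varphi_{is}^{-1}\,ds$, note that your computation shows it is a path of potentials for $\omega_t$, and verify the geodesic equation for that choice --- which your second step already does. With that adjustment the proof is complete.
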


\begin{proof}
Let $\varphi_\tau$ be the diffeomorphism defined in Theorem \ref{ajax}, for $\tau=it, t\in \R,$ so that, in 
local complex coordinates with respect to $J_0$, $$\varphi_\tau(z,\bar z) = (e^{itX_h} z, e^{-itX_h} \bar z).$$ 
Letting $\omega_\tau=(\varphi_\tau^{-1})^* \omega$, note that, from the proof of Theorem \ref{kahlerfamily}, 
equations (\ref{moser}), (\ref{phitokappa}) and from $J_\tau = \varphi_\tau^*J_0$, it follows 
that $\omega_\tau$ is of type $(1,1)$ with respect to $J_0$. In this way, we have the Moser maps
$$
\varPhi_t=\varphi_\tau:(M,J_\tau,\omega)\to (M,J_0,\omega_\tau), \, \tau = it, t\in \R.
$$

Recall that, 
$\kappa_\tau = -2 {\rm Im}\,\psi_\tau$, where 
$$
\psi_\tau = -\frac{i}{2}\, e^{\tau X_h}\cdot \kappa_0 + \tau h - \alpha_\tau,  
$$
and $d\alpha_\tau= e^{\tau d\circ {i}_{X_h}}\theta -\theta$, with $\omega = -d\theta$. One has,
$$
\dot \kappa_\tau := \frac{d\kappa_\tau}{dt}= -X_h(\psi_\tau+\bar\psi_\tau) - 2h + 2\theta(X_h),
$$
where we used 
$$
\dot \alpha_\tau = iX_h(\alpha_\tau) + i \theta(X_h).
$$
Using (\ref{phitokappa}), let $\phi_t = \kappa_\tau \circ \varPhi^{-1}_t -\kappa_0$, so that
$$
\phi_t (z,\bar z) = \kappa_\tau (e^{-\tau X_h}z, e^{\tau X_h \bar z})-\kappa_0(z,\bar z), \, \tau = it, t\in \R. 
$$
Then, 
$$
\dot \phi_t = \dot \kappa_\tau + \partial_\tau\kappa_\tau(-iX_h)+\bar\partial_\tau\kappa_\tau (iX_h).
$$
Using $d\psi_\tau (X_h)=X_h(\psi_\tau)=\partial_\tau \psi_\tau(X_h)+\bar\partial_\tau \psi_\tau(X_h)$ and $\theta^{(0,1)_\tau}=
\bar\partial_\tau \psi_\tau$, we obtain finally
\begin{equation}\label{velocity}
\dot \phi_t = -2h \circ \varPhi_t^{-1}.
\end{equation}
It follows that, for $\tau =it, t\in \R$,
$$
\ddot \phi_t \circ \varPhi_t = 2i \partial_\tau h (X_h) - 2i \bar\partial_\tau h (X_h) 
=2dh (J_\tau X_h) = 2 \omega(X_h,J_\tau X_h) = 2 ||X_h||^2_{\gamma_\tau}.
$$
Since we are on a K\"ahler manifold we have 
$$
||X_h||^2_{\gamma_\tau} = ||dh||^2_{\gamma_\tau}.
$$
On the other hand, since $\varPhi_t^*  \tilde \gamma_\tau = \gamma_\tau$, for $\tau=it, t\in \R$, we have
$$
\nabla \dot\phi_t = -2 \nabla h \circ   \varPhi_t^{-1},
$$ 
where on the left hand side we have the gradient of $\dot\phi_t$ with respect to the metric $\tilde \gamma_\tau$, while 
on the right hand side we have the gradient of $-2h$ with respect to the metric $\gamma_\tau$.
Therefore, using $||\nabla h||_{\gamma_\tau}^2=||dh||_{\gamma_\tau}^2$, we obtain the geodesic equation
$$
\ddot \phi_t = \frac12 ||\nabla \dot\phi_t||^2_{\tilde \gamma_\tau}, \, \tau = it, t\in \R,
$$
as we wanted. 
\end{proof}

\begin{remark}The minus sign in the expression for the velocity $\dot\phi_t$ in (\ref{velocity}) can be traced back to Theorem 
\ref{ajax}, where we see that the Moser map is $\varphi_\tau$ and not $\varphi_\tau^{-1}$.
\end{remark}

\begin{remark}Note that in Proposition \ref{geod} the simplification of taking $\tau$ to be pure imaginary gives 
no essential loss of generality. For real time $\tau=s\in \R$, the diffeomorphisms $\varphi_s$ form an actual Hamiltonian flow, 
with $\omega_s=\omega$. 
In this case, as a consequence of Proposition \ref{realnada}, 
$\kappa_s = \kappa_0\circ \varphi_s$ and $\phi_s=0$ so that we always obtain diffeomorphic K\"ahler structures
$(M,\omega,J_0,\gamma_0) \cong (M,\omega,J_s,\gamma_s)$, where $J_s=\varphi_s^*J_0, \gamma_s = \varphi_s^*\gamma_0$. So, 
to actually move in the ``moduli space of K\"ahler structures modulo diffeomorphism'' we need a non-zero imaginary part of 
$\tau$.
\end{remark}

\begin{remark}The geodesic imaginary time families of K\"ahler structures considered in Theorem \ref{kahlerfamily} 
and in Proposition \ref{geod} have been studied for symplectic toric manifolds \cite{BFMN, KMN1} and 
for cotangent bundles of Lie groups of compact type \cite{KMN1,KMN2}. 
It is straightforward to check that the K\"ahler potentials  presented in those works are instances of formula 
(\ref{formula}). Cotangent bundles of compact Lie groups are also described in Section \ref{ss33}. 
In these cases, the diffeomorphisms $\varphi_\tau$ exist for large values of $\tau$. Further 
investigation of the toric case will appear in \cite{KMN3}.
\end{remark}

\section{Cotangent bundle of a compact Lie group}

\label{ss33}
In this section, for completeness, we recall some of the results of \cite{KMN1} and show that the 
families of bi-invariant K\"ahler structures on cotangent bundles of compact Lie groups described 
there are geodesic families. (The case of the quadratic symplectic potential has also been studied in \cite{HK2}.)
This is an example where one can analytically continue to complex time Hamiltonian flows 
of not necessarily real analytic Hamiltonian functions. (See Section \ref{ssnonanal}.)

Let $K$ be a compact Lie group. Infinite families of $K\times K$-invariant K\"ahler 
structures on $TK\cong K\times Lie(K) \cong K_\C$, where $K_\C$ is the complexification of $K$, 
were studied in \cite{KMN1}. 

Let $h\in C^{\infty}(Lie(K))$ be an Ad-invariant strictly convex function such that the inverse of the 
Hessian of $h$ has bounded operator norm. 
Let $\{y^i\}_{i=1,\dots, n}$ be cartesian coordinates on 
$Lie(K)$ associated to a choice of orthonormal baisis with respect to the Killing metric. Let
\begin{equation}\label{legendre}
u^i=\frac{\partial h}{\partial y^i},\,\, i=1,\dots,n.
\end{equation}

Then, the map 
\begin{eqnarray}\nonumber
T^*K\cong K\times Lie(K) &\stackrel{\psi_0}{\to}& K_\C\\ \nonumber
(x,Y) &\mapsto& xe^{i u(Y)},
\end{eqnarray}
is a diffeomorphism and defines on $T^*K$ (and on $K_\C$) a K\"ahler structure $(T^*K, \omega, J_0)$ \cite{KMN1}.
 To study
the action of the Lie series $e^{\tau X_h}$ on the polarization ${\cal P}_0$ corresponding
to $J_0$
it is sufficient to study its action on $J_0$--holomorphic functions of the form,
$f_{\pi, E_\pi}(x, Y) = tr(E_\pi \pi(x e^{iu(Y)})$, where $\pi$ denotes a  finite dimensional representation of $K_\C$
and $E_\pi$  an endomorphism of the space of the representation. 
\begin{proposition} \label{assi}
\be
\label{ass}
e^{\tau X_h}(f_{\pi, E_\pi})(x, Y) =  tr(E_\pi \pi(x e^{(i+\tau)u(Y)}))
\ee
\end{proposition}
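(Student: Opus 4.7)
The plan is to reduce the proposition to a closed-form computation by exploiting Ad-invariance of $h$.

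First, I would compute $X_h$ explicitly in the left trivialization $T^*K \cong K \times \mathrm{Lie}(K)$. For a Hamiltonian depending only on $Y$, the canonical Poisson bracket expression gives $X_h = \sum_i u^i(Y) e_i^L - \sum_{ij} [u(Y),Y]^j \partial_{y^j}$, where $e_i^L$ are the left-invariant vector fields on $K$. Ad-invariance of $h$ forces $[u(Y),Y]=0$, so $X_h = \sum_i u^i(Y)\, e_i^L$ acts only on the $K$-factor and leaves $Y$ fixed. Consequently the real-time flow is $\varphi_t(x,Y) = (x\, e^{t u(Y)},\, Y)$.

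Next, I would fix $Y$ and write $A := d\pi(u(Y))$, so $\pi(e^{i u(Y)}) = e^{iA}$ and
\begin{equation*}
f_{\pi,E_\pi}(x,Y) = \operatorname{tr}\bigl(E_\pi\, \pi(x)\, e^{iA}\bigr).
\end{equation*}
A single application of $X_h$ differentiates only the $\pi(x)$ factor (since $Y$ is constant along the flow), giving
\begin{equation*}
X_h f_{\pi,E_\pi}(x,Y) = \operatorname{tr}\bigl(E_\pi\, \pi(x)\, A\, e^{iA}\bigr).
\end{equation*}
An immediate induction, using that $A$ and $e^{iA}$ commute and that at each step $X_h$ again only brings down a factor of $A$ on the right of $\pi(x)$, yields
\begin{equation*}
X_h^k f_{\pi,E_\pi}(x,Y) = \operatorname{tr}\bigl(E_\pi\, \pi(x)\, A^k\, e^{iA}\bigr).
\end{equation*}

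Summing the Lie series termwise then gives
\begin{equation*}
e^{\tau X_h} f_{\pi,E_\pi}(x,Y) = \operatorname{tr}\bigl(E_\pi\, \pi(x)\, e^{\tau A}\, e^{iA}\bigr) = \operatorname{tr}\bigl(E_\pi\, \pi(x e^{(i+\tau) u(Y)})\bigr),
\end{equation*}
which is the desired identity. The series converges absolutely for every $\tau\in\C$ because $A$ is a finite-dimensional endomorphism, so the Grobner-type radius of convergence is in fact infinite on each such matrix coefficient. Equivalently, one can argue by analytic continuation: for real $t$ the identity $e^{t X_h} f_{\pi,E_\pi} = f_{\pi,E_\pi}\circ \varphi_t = \operatorname{tr}(E_\pi \pi(x e^{(i+t)u(Y)}))$ follows from Grobner's theorem, and both sides extend to entire functions of the time parameter.

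The only subtle point is the iteration step: I must verify that $X_h$, when applied to $X_h^{k-1} f_{\pi,E_\pi}$, does not produce extra terms coming from $Y$-derivatives of $u(Y)$ inside $A^k e^{iA}$. This is exactly where Ad-invariance of $h$ is essential, because it ensures that $X_h$ annihilates $Y$, so $A = d\pi(u(Y))$ is a constant as far as the flow is concerned and only the $\pi(x)$ factor is differentiated at each step. This is the main (and essentially only) technical point of the argument.
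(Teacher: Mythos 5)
Your proof is correct and follows essentially the same route as the paper's own ``more explicit proof'': you identify $X_h=\sum_i u^i(Y)\,e_i^L$ (the paper's equation for $X_h$ as a combination of left-invariant vector fields with no $\mathrm{Lie}(K)$-component, which you justify directly via Ad-invariance and $[u(Y),Y]=0$), iterate to get $X_h^k f_{\pi,E_\pi}=\operatorname{tr}(E_\pi\,\pi(x)\,d\pi(u(Y))^k\,\pi(e^{iu(Y)}))$, and sum the Lie series. The only cosmetic difference is that the paper first deduces the statement from Theorem 3.7 of [KMN1] via the shift $\tau'=\tau+i$ before giving this same explicit computation.
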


\begin{proof}
This is a consequence of Theorem 3.7 in \cite{KMN1}. There, it is shown that the operator $e^{\tau' X_h}$ 
applied to $tr(E_\pi \pi(x))\in C^\infty(K)$ gives
$$
e^{\tau' X_h} \cdot tr(E_\pi \pi(x)) = tr(E_\pi \pi (xe^{\tau' u(Y)})).
$$
On the other hand, for $\tau'=\tau+i$,
$$
e^{(\tau+i) X_h} \cdot tr(E_\pi \pi(x)) =e^{\tau X_h} \cdot tr(E_\pi \pi(xe^{iu(Y)})), 
$$
which proves the proposition. 
For a more explicit proof, note that, from \cite{KMN1},
\be\label{hamvectstar}
X_h = \sum_{i=1}^n u^i(Y) X_i,
\ee
where $\{X_i\}_{i=1,\dots,n}$ is a frame of left-invariant vector fields on $K\times Lie(K)$, with zero 
component along $Lie(K)$. Therefore, we have
$$
X_h \cdot \pi(xe^{iu(Y)}) = \pi(x) \pi(u(Y)) \pi(e^{iu(Y)}), 
$$
where we also denote by $\pi$ the representation induced by $\pi$ on $Lie(K)$. Therefore,
$$
e^{\tau X_h} \cdot tr(E_\pi \pi(xe^{iu(Y)})) = tr(E_\pi \pi(x) \pi(e^{\tau u(Y)}) \pi(e^{iu(Y)})) = 
tr(E_\pi \pi (xe^{(\tau+i) u(Y)})).
$$
\end{proof}

\begin{remark}
Notice that Proposition \ref{assi} does not require the smooth function $h$ to be real analytic.
\end{remark}

\noindent The functions (\ref{ass}) define a polarization ${\cal P}_\tau$, which is K\"ahler for 
$s = {\rm Im}\,(\tau) >-1$ (see Theorem \ref{thmkmn1}),  real 
for $s=-1$ and  pseudo-K\"ahler  for $s<-1$.
The map $\phi_\tau \, : T^*K \rightarrow T^*K$, mapping $J_0$ holomorphic functions
to ${\cal P}_\tau$ polarized functions, is easily seen to be given by
$$
\varphi_\tau = \psi_0^{-1} \circ \psi_\tau,
$$
where 
\bas
T^*K\cong K\times Lie(K) &\stackrel{\psi_\tau}{\to}& K_\C\\ \nonumber
(x,Y) &\mapsto& xe^{(i+\tau) u(Y)}
\eas

\begin{theorem}\label{thmkmn1}(\cite{KMN1}) For $s={\rm Im}\,\tau >-1$,
$(T^*K,\omega,\varphi_\tau^*J_0)$ is a K\"ahler manifold, with $K\times K$ invariant K\"ahler structure and 
K\"ahler potential obtained by a Legendre transform of the $K\times K$-invariant Hamiltonian $h$,
$$
\kappa_\tau (u(Y)) = 2(s+1) (Y\cdot u(Y)-h(Y)).
$$   
\end{theorem}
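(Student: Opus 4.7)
The plan is to apply the general formula (\ref{formula}) from Theorem \ref{kahlerfamily} to the specific setup of Section \ref{ss33}, and to observe that all three terms in $\psi_\tau$ simplify drastically because of the special structure of $X_h$ given in (\ref{hamvectstar}).

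First I would identify the initial K\"ahler potential for $(T^*K,\omega,J_0)$. Using the diffeomorphism $\psi_0(x,Y)=xe^{iu(Y)}$, a direct computation (or citation of the corresponding result in \cite{KMN1}) shows that a $K\times K$-invariant K\"ahler potential for $(T^*K,\omega,J_0)$ is the Legendre transform of $h$, namely $\kappa_0 = 2(Y\cdot u(Y)-h(Y))$. This is a function of $Y$ alone. Next, since (\ref{hamvectstar}) says $X_h = \sum_i u^i(Y) X_i$ with the $X_i$ left-invariant vector fields on $K$ having no $\mathrm{Lie}(K)$-component, $X_h$ kills any function of $Y$ alone. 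Consequently $X_h(\kappa_0)=0$ and therefore $e^{\tau X_h}\cdot \kappa_0 = \kappa_0$.

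Second, I would compute the term $\alpha_\tau$. For the canonical symplectic potential $\theta$ on $T^*K$ written in terms of the left-invariant Maurer--Cartan frame, the pairing with $X_h$ yields $\theta(X_h) = Y\cdot u(Y)$, again a function of $Y$ alone. Hence $e^{tX_h}\cdot \theta(X_h) = \theta(X_h)$, so $\alpha_t = t\, Y\cdot u(Y)$ for real $t$, and by analytic continuation $\alpha_\tau = \tau\, Y\cdot u(Y)$. Substituting into (\ref{formula}) gives
\begin{equation*}
\psi_\tau \;=\; -\frac{i}{2}\cdot 2(Y\cdot u - h) + \tau h - \tau\, Y\cdot u \;=\; -(i+\tau)\bigl(Y\cdot u(Y) - h(Y)\bigr),
\end{equation*}
and hence $\kappa_\tau = -2\,\mathrm{Im}\,\psi_\tau = 2(1+\mathrm{Im}\,\tau)(Y\cdot u - h) = 2(s+1)\bigl(Y\cdot u(Y) - h(Y)\bigr)$, as claimed. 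The $K\times K$-invariance is inherited from the Ad-invariance of $h$ and the $K\times K$-invariance of $\omega$ and $J_0$.

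The main obstacle is that Theorem \ref{kahlerfamily} only guarantees the formula for $|\tau|<T$, whereas here one claims validity on the much larger region $\{s>-1\}$. To close the gap, I would argue that the map $\varphi_\tau = \psi_0^{-1}\circ \psi_\tau$ is in fact a global diffeomorphism for all $s>-1$, since $\psi_\tau(x,Y) = xe^{(i+\tau)u(Y)}$ is, for such $\tau$, a composition of the Legendre-type rescaling $Y\mapsto(1+s-ir)Y$-equivalent data with a globally defined map to $K_\C$; strict convexity of $h$ together with $s+1>0$ makes this bijective with smooth inverse. Positivity of the resulting metric then follows from $Y\cdot u(Y)-h(Y)>0$ for $Y\ne 0$ (strict convexity, normalising $h$ so $h(0)=0$) together with $s+1>0$, ensuring $\kappa_\tau$ is strictly plurisubharmonic. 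The detailed verification of these global properties is exactly the content of \cite{KMN1}, so I would cite that reference for this step, the remainder of the argument being the short computation above showing that formula (\ref{formula}) reproduces the claimed Legendre-transform K\"ahler potential.
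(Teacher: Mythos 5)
Your computation is correct and is essentially identical to the paper's treatment: the paper states this theorem as a citation of \cite{KMN1} and then proves, in the proposition immediately following it, exactly your central calculation --- $X_h(\kappa_0)=0$ since $\kappa_0$ depends on $Y$ alone, $\theta(X_h)=Y\cdot u(Y)$ with $X_h(\theta(X_h))=0$ so that $\alpha_\tau=\tau\, Y\cdot u(Y)$, and substitution into (\ref{formula}) giving $\psi_\tau=-(i+\tau)(Y\cdot u(Y)-h(Y))$ and $\kappa_\tau=2(s+1)(Y\cdot u(Y)-h(Y))$. One caveat: your closing claim that positivity of $Y\cdot u(Y)-h(Y)$ together with $s+1>0$ ``ensures $\kappa_\tau$ is strictly plurisubharmonic'' is a non sequitur (positivity of a function says nothing about its complex Hessian in the $J_\tau$-holomorphic coordinates), but since you, like the paper, ultimately defer the global statements --- that $\varphi_\tau$ is a diffeomorphism and that the metric is positive on all of $\{s>-1\}$ rather than just for small $|\tau|$ --- to \cite{KMN1}, this does not damage the argument as you have structured it.
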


It is straightforward to check that the K\"ahler potentials $\kappa_\tau$ correspond to a 
geodesic in the space of K\"ahler metrics generated by the analytic continuation to imaginary time of the flow 
of ${X_h}$. In fact,

\begin{proposition}
The K\"ahler potential $\kappa_\tau$ in Theorem \ref{thmkmn1} satisfies equation (\ref{formula}) in Theorem \ref{kahlerfamily}.
\end{proposition}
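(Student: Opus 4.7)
The plan is to explicitly evaluate each term in formula (\ref{formula}) for the cotangent bundle case and compare with the Legendre-type expression of Theorem \ref{thmkmn1}. The key observation that makes this tractable is that the Hamiltonian vector field (\ref{hamvectstar}) has zero component along the $\mathrm{Lie}(K)$ factor, so it annihilates any function depending only on $Y$.

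First, I would identify a suitable local potential $\theta$ satisfying $\omega = -d\theta$ and $\theta^{(0,1)} = \frac{i}{2}\bar\partial_0\kappa_0$ with $\kappa_0(Y) = 2(Y\cdot u(Y) - h(Y))$. Using the bi-invariant description from \cite{KMN1} and the canonical one-form on $T^*K$, one can take (up to the sign conventions of Theorem \ref{kahlerfamily}) $\theta = y^i \eta^i$, where $\{\eta^i\}$ is the left-invariant coframe dual to the left-invariant frame $\{X_i\}$ used in (\ref{hamvectstar}). Pairing with $X_h = \sum_i u^i(Y) X_i$ gives the crucial identity
\begin{equation*}
\theta(X_h) = \sum_i y^i u^i(Y) = Y\cdot u(Y).
\end{equation*}

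Next, since $X_h$ differentiates only along $K$ while $\kappa_0$ and $\theta(X_h)$ are functions of $Y$ alone, I would conclude
\begin{equation*}
X_h(\kappa_0) = 0, \qquad X_h\bigl(\theta(X_h)\bigr) = 0.
\end{equation*}
Plugging into the Lie series definitions immediately gives $e^{\tau X_h}\cdot \kappa_0 = \kappa_0$ and, from (\ref{alfa}) together with the remark following Theorem \ref{kahlerfamily},
\begin{equation*}
\alpha_\tau = \tau\, \theta(X_h) = \tau\, Y\cdot u(Y).
\end{equation*}

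Finally, I would substitute these into (\ref{formula}):
\begin{equation*}
\psi_\tau = -\tfrac{i}{2}\,\kappa_0 + \tau h - \alpha_\tau
         = -i\bigl(Y\cdot u - h\bigr) + \tau h - \tau\,Y\cdot u
         = -(i+\tau)\bigl(Y\cdot u(Y) - h(Y)\bigr).
\end{equation*}
Since $Y\cdot u - h$ is real, writing $\tau = r + is$ yields $\mathrm{Im}\,\psi_\tau = -(1+s)(Y\cdot u - h)$, and hence
\begin{equation*}
\kappa_\tau = -2\,\mathrm{Im}\,\psi_\tau = 2(1+s)\bigl(Y\cdot u(Y) - h(Y)\bigr),
\end{equation*}
which is exactly the formula stated in Theorem \ref{thmkmn1}.

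The only nontrivial step is the first one: pinning down the local potential $\theta$ so that the normalization $\theta^{(0,1)} = \frac{i}{2}\bar\partial_0 \kappa_0$ holds with the $\kappa_0$ of \cite{KMN1}. This is a direct calculation in the holomorphic coordinates induced by $\psi_0$ on $K_\C$, and is essentially recorded (with possibly different sign conventions) in \cite{KMN1}; once it is in place, the remainder reduces to the elementary observations that $X_h$ acts trivially on $Y$-dependent functions and that $h$, $Y\cdot u(Y)$ are real.
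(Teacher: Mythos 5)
Your proposal is correct and follows essentially the same route as the paper: the same choice of symplectic potential $\theta=\sum_i y^i w_i$, the same key identities $\theta(X_h)=Y\cdot u(Y)$, $X_h(\kappa_0)=0$ and $X_h(\theta(X_h))=0$, hence $\alpha_\tau=\tau\,Y\cdot u(Y)$, and the same final substitution into (\ref{formula}). The normalization issue you flag for $\theta^{(0,1)}=\frac{i}{2}\bar\partial_0\kappa_0$ is likewise left as a citation to \cite{KMN1} in the paper, so your treatment matches it in both substance and level of detail.
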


\begin{proof}A symplectic potential is $\theta = \sum_{i=1}^ny^iw_i$, where 
$\{w_i\}_{i=1,\dots,n}$ is the frame of left-invariant 1-forms on $K$ dual to  $\{X_i\}_{i=1,\dots,n}$ and 
pulled-back to $T^*K$ by the canonical 
projection. One then has, from (\ref{hamvectstar}), $\theta(X_h)= u(Y)\cdot Y$ and $\iota_{X_h}d(\theta(X_h))=0$, so that 
$\alpha_\tau$ in Theorem \ref{kahlerfamily} is
$$
\alpha_\tau = \tau  u(Y)\cdot Y.
$$

Then, according to Theorem \ref{kahlerfamily},
$$
\kappa_\tau = -2{\rm Im}\, \left(-\frac{i}{2}e^{\tau X_h}\kappa_0 +\tau h - \tau u(Y)\cdot Y\right).
$$
Since (see \cite{KMN1}) $\kappa_0=2(Y\cdot u(Y)-h(Y))$ in this case, from (\ref{hamvectstar}) we have $X_h(\kappa_0)=0$ and 
$\kappa_\tau = 2({\rm Im}\,\tau+1) (u(Y)\cdot Y -h)$,
as required.
\end{proof}

\bigskip
\large{\bf Acknowledgements:} We would like to thank the referee for several corrections and useful suggestions.
We wish to thank Will Kirwin for many discussions and Brian Hall for corrections 
to the first version of the paper. This work was partially supported by CAMGSD-LARSyS through the FCT 
Program POCTI-FEDER and by the FCT projects PTDC/MAT/119689/2010 and EXCL/MAT-GEO/0222/2012.


\begin{thebibliography}{FMMN06}

\bibitem[Be]{Be} R.~Berman, \emph{K-polystability of Q-Fano varieties admitting K\"ahler-Einstein
metrics}, arxiv:1205.6214.

\bibitem[BLU]{BLU} D.~Burns, E.~Lupercio and A.~Uribe, \emph{The exponential map of the complexification of Ham in the real-analytic case}, arXiv:1307.0493.


\bibitem[BFMN]{BFMN}
T.~Baier, C.~Florentino, J.~M. Mour{\~a}o, and J.~P. Nunes, \emph{Toric K\"ahler 
metrics seen from infinity, quantization and compact tropical amoebas}, J. Diff. Geom. \textbf{89} (2011), 411-454.

\bibitem[CDS]{CDS} X-X.~Chen, S.~Donaldson and S.~Sun, \emph{K\"ahler-Einstein metrics and stability},  
Int. Math. Res. Not. IMRN 2014, no. 8, 2119--2125.

\bibitem[Do1]{Do1}
S. K. Donaldson, {\it Symmetric spaces, K\"ahler geometry and Hamiltonian dynamics}, Amer. Math. Soc. Transl. (2) {\bf 196} (1999), 13--33. 

\bibitem[Do2]{Do2}
S. K. Donaldson, {\it Moment maps and diffeomorphisms}, Asian, J. Math.,
{\bf 3} (1999), 1--16.

\bibitem[Do3]{Do3}  S. Donaldson, \emph{Scalar curvature and projective embeddings. I}, J. Diff. Geom. \textbf{59} 
(2001), 479--522.

\bibitem[FMMN1]{FMMN1}
C.~Florentino, P.~Matias, J.~M. Mour{\~a}o, and J.~P. Nunes, \emph{Geometric
  quantization, complex structures and the coherent state transform}, J. Funct.
  Anal. \textbf{221} (2005), no.~2, 303--322.

\bibitem[FMMN2]{FMMN2}
C.~Florentino, P.~Matias, J.~M. Mour{\~ao}, and J.~P. Nunes, \emph{On the {BKS}
  pairing for {K}\"ahler quantizations of the cotangent bundle of a {L}ie
  group}, J. Funct. Anal. \textbf{234} (2006), no.~1, 180--198.

\bibitem[Gr1]{G1} W.~Gr\"obner, \emph{Die Lie-Reihen und ihre Anwendungen}, VEB Deutscher Verlag der Wissenschaften, Berlin, 1960.

\bibitem[Gr2]{G2} W.~Gr\"obner, \emph{General theory of the Lie series}, in \emph{Contributions to the method of Lie series}, W.~Gr\"obner and H.~Knapp Eds., Bibliographisches Institut Manheim, 1967.

\bibitem[GS1]{GS1}
V.~Guillemin and M.~Stenzel, \emph{Grauert tubes and the homogeneous
  {M}onge-{A}mp{\`e}re equation}, J. Diff. Geom. \textbf{34}
  (1991), no.~2, 561--570.

\bibitem[GS2]{GS2}
V.~Guillemin and M.~Stenzel, \emph{Grauert tubes and the homogeneous {M}onge-{A}mp{\`e}re equation
  {II}}, J. Diff. Geom. \textbf{35} (1992), no.~3, 627--641.

\bibitem[HK1]{HK1}
B.~Hall and W.~D. Kirwin, \emph{Adapted complex structures and the geodesic
  flow}, Math. Ann. \textbf{350} (2011), 455--474.

\bibitem[HK2]{HK2} B.~Hall and W.~D. Kirwin, \emph{Complex structures adapted to magnetic flows}, J. Geom. Phys., to appear.

\bibitem[KMN1]{KMN1} W.~D. Kirwin, J.~M. Mour{\~ao}, and J.~P. Nunes, 
\emph{Complex time evolution in geometric quantization and generalized coherent state transforms}, 
J. Funct. Anal. \textbf{265} (2013) 1460-1493. 

\bibitem[KMN2]{KMN2}
W.~D. Kirwin, J.~M. Mour{\~ao}, and J.~P. Nunes, \emph{{C}omplex time evolution
  and the {M}ackey-{S}tone-{V}on {N}eumann theorem},  J. Math. Phys. \textbf{55} (2014) 102101.

\bibitem[KMN3]{KMN3}
W.~D. Kirwin, J.~M. Mour{\~ao}, and J.~P. Nunes, \emph{Complex symplectomorphisms and pseudo-K\"ahler islands in the quantization of toric manifolds}, arXiv:1411.2793.

\bibitem[KW1]{KW1}
W.~D. Kirwin and S.~Wu, \emph{Geometric quantization, parallel transport and
  the {Fourier} transform}, Comm. Math. Phys. \textbf{266} (2006), 577--594.

\bibitem[KW2]{KW2} W.~D. Kirwin and S.~Wu, \emph{Momentum space for compact Lie groups and the 
Peter-Weyl theorem}, in preparation.

\bibitem[LS1]{LS}
L.~Lempert and R.~Sz{\"o}ke, \emph{Global solutions of the homogeneous complex
  Monge-Amp{\`e}re equation and complex structures on the tangent bundle of
  riemannian manifolds}, Math. Ann. \textbf{319} (1991), no.~4, 689--712.

\bibitem[LS2]{Lempert-Szoke10}
L.~Lempert and R.~Sz\"{o}ke, \emph{ Direct images, fields of Hilbert spaces, and geometric quantization},
Comm. Math. Phys. \textbf{327} (2014) 49-99.

\bibitem[LS3]{LS12} L.~Lempert and R.~Sz\"{o}ke, \emph{A new look at adapted complex structures}, 
Bull. Lond. Math. Soc. \textbf{44} (2012), 367--374.

 \bibitem[Ma]{M} T.~Mabuchi, \emph{Some symplectic geometry on compact K\"ahler manifolds. I.}, 
Osaka J. Math. \textbf{24} (1987), 227--252.

\bibitem[MN]{MN} J.~Mour\~ao and J.~P. Nunes, \emph{Decomplexification of integrable systems and quantization}, in preparation.

\bibitem[PS]{PS} D.H.~Phong and J.~Sturm, \emph{Lectures on stability and constant scalar curvature}, Handbook of 
geometric analysis, No. 3, 357--436, Adv. Lect. Math. (ALM), 14, Int. Press, Somerville, MA, 2010. 

\bibitem[RZ1]{RZ1} Y.~Rubinstein and S.~Zelditch, \emph{The Cauchy problem for the homogeneous Monge-Amp\`ere equation, 
I. Toeplitz quantization}, J. Diff. Geom. \textbf{90} (2012), 303--327.

\bibitem[RZ2]{RZ2} Y.~Rubinstein and S.~Zelditch, \emph{The Cauchy problem for the homogeneous Monge-Amp\`ere equation, II. Legendre transform}, Adv. Math. \textbf{228} (2011), 2989-3025.

\bibitem[RZ3]{RZ3} Y.~Rubinstein and S.~Zelditch, \emph{The Cauchy problem for the homogeneous Monge-Amp\`ere equation, III. Lifespan}, arXiv:1205.4793.

\bibitem[Se]{Se}
S.~Semmes, {\it Complex Monge--Amp\`ere and symplectic manifolds},
Amer.J. Math. {\bf 114} (1992), 495--550.

\bibitem[St]{St} J.~Stoppa, \emph{K-stability of constant scalar curvature K\"ahler manifolds}, 
Adv. Math. \textbf{221} (2009), 13971408.

\bibitem[Sz]{Szoke-91}
R.~Sz{\"o}ke, \emph{Complex structures on tangent bundles of riemannian
  manifolds}, Math. Ann. \textbf{291} (1991), no.~3, 409--428.

\bibitem[Th1]{Th1}
T.~Thiemann, \emph{Reality conditions inducing transforms for quantum gauge
  field theory and quantum gravity}, Class. Quant. Grav. \textbf{13} (1996),
  1383--1404.

\bibitem[Th2]{Th2}
T.~Thiemann, \emph{Gauge field theory coherent states (GCS). I. General proper-
 ties}, Class. Quant. Grav. \textbf{18} (2001), no. 11, 2025--2064.

\bibitem[Ti]{Ti} G.~Tian, \emph{K\"ahler-Einstein metrics with positive scalar curvature}, Invent.
Math. \textbf{130} (1997), 137.

\bibitem[Va]{Va} P.~Vanhaecke, \emph{Algebraic integrability: a survey.}, Phil. Transl. 
R. Soc. A {\bf 366} (2007), 1203--1224.

\bibitem[Wo]{Woodhouse}
N.~Woodhouse, \emph{Geometric quantization}, Oxford University Press, Oxford,
  1991.
\end{thebibliography}
\end{document}